\newcommand{\F}{\mathcal{F}}
\newcommand{\Fone}{\mathcal{F}_Z}
\newcommand{\Ftwo}{\mathcal{F}_{Z,NL}}
\newcommand{\Fzero}{\mathcal{F}_{ALL}}
\newcommand{\x}{\mathbf{x}}
\newcommand{\y}{\mathbf{y}}
\newcommand{\z}{\mathbf{z}}
\newcommand{\w}{\mathbf{w}}
\newcommand{\vv}{\mathbf{v}}
\newcommand{\Y}{\mathbf{Y}}
\title{Nonlinear Network Identifiability \linebreak with Full Excitations
}
\author{Renato Vizuete and Julien M. Hendrickx
\thanks{This work was supported by F.R.S.-FNRS via the \emph{KORNET} project, and by the \emph{SIDDARTA} Concerted Research Action (ARC) 
of the Fédération Wallonie-Bruxelles.}
\thanks{R.~Vizuete and J.~M.~Hendrickx are with ICTEAM institute, UCLouvain, B-1348, Louvain-la-Neuve, Belgium. R.~Vizuete is a FNRS Postdoctoral Researcher - CR.
{\tt\small renato.vizueteharo@uclouvain.be},
{\tt\small julien.hendrickx@uclouvain.be}\protect.}
}
\newtheorem{definition}{Definition}
\newtheorem{assumption}{Assumption}
\newtheorem{theorem}{Theorem}
\newtheorem{corollary}{Corollary}
\newtheorem{proposition}{Proposition}
\newtheorem{lemma}{Lemma}
\newtheorem{remark}{Remark}
\newtheorem{example}{Example}
\newcommand{\prt}[1]{\left(#1\right)}
\newcommand{\brc}[1]{\left\{#1\right\}}
\newcommand{\abs}[1]{\left|#1\right|}
\newcommand{\R}{\mathbb R}
\newcommand{\NN}{\mathcal{N}}
\newcommand{\aNN}{\abs{\mathcal{N}_i}}
\newcommand{\zero}{\mathbf{0}}
\begin{document}

\maketitle
\thispagestyle{empty}

\begin{abstract}
We derive conditions for the identifiability of nonlinear networks characterized by additive dynamics at the level of the edges when all the nodes are excited. In contrast to linear systems, we show that the measurement of all sinks is necessary and sufficient for the identifiability of directed acyclic graphs, under the assumption that dynamics are described by twice continuously differentiable functions without constant terms (i.e., $f(0)=0$). But if constant terms are present, then the identifiability is impossible as soon as one node has more than one in-neighbor. In the case of general digraphs that may contain cycles, we consider additively separable functions for the analysis of the identifiability, and we show that the measurement of one node of all the sinks of the condensation digraph is necessary and sufficient. Several examples are added to illustrate the results.
\end{abstract}

\begin{IEEEkeywords}
Network analysis and control, system identification.
\end{IEEEkeywords}

\section{Introduction}

Networked systems composed of single entities or subsystems that interact in a network can be found in many domains including biological networks, power networks, social networks, etc \cite{boccaletti2006complex,bullo2024lectures}. In these types of systems, one of the most important characteristics is the network topology, which models the exchange of information among the nodes. However, in many networks, different dynamical systems may also be associated at the level of edges or nodes, which generates a more complex behavior in the system. 

Knowledge of the interactions between the nodes is useful and in many situations can be essential for the analysis, estimation and control of networks \cite{wang2014pinning}. For this reason, the identification of networks has become an important research area where the objective is to identify the network topology and all the dynamics associated with nodes and edges \cite{gonccalves2008necessary,mauroy2017spectral,jahandari2022optimal,verhaegen2022data}. Since in many networks the number of nodes and edges can be relatively large, the appropriate placement of sensors and instruments to perform experiments for the identification process is crucial \cite{ramaswamy2019generalized,bombois2023informativity,kivits2023identifiability}. In this case, when the network topology is available, the notion of \emph{identifiability} plays an important role since it allows us to determine which nodes should be excited and measured to identify the different dynamics in the network. 
The identifiability in networked systems is based on the classical notion of identifiability used in system identification \cite{ljung1999system}, where if each set of local dynamics (edges) leads to a unique global behavior (measurements), then it means that the local dynamics can be distinguished based on the global behavior. Hence, the focus of the identifiability is the analysis of the uniqueness of the local dynamics and not the design of identification procedures. 
The identifiability problem in networks has been extensively studied in the case of linear systems associated with edges where, based on the knowledge of the network topology, conditions for the full measurement or full excitation cases have been provided in \cite{hendrickx2019identifiability,vanwaarde2020necessary}. The case of partial measurement/excitation has been analyzed in several works \cite{legat2020local,legat2021path,bazanella2019network,cheng2023necessary,legat2023combinatorial} where different identifiability conditions have been proposed depending on the scenario, though no necessary and sufficient condition is known. However, most real networks are nonlinear, including systems in important fields of interest like
coupled oscillators \cite{dorfler2014synchronization}, gene regulatory networks \cite{pan2012reconstruction}, biochemical reaction networks \cite{aalto2020gene}, organic reactions \cite{bures2023organic}, social networks \cite{bizyaeva2023nonlinear}, among others. Regarding the identifiability of nonlinear networks, in \cite{verdiere2024identifiability}, the problem has been analyzed in continuous time where an algorithm has been proposed to determine the identifiability of the dynamics based on the knowledge of specific variables over time. However, to the best of our knowledge, no identifiability conditions based on the network topology have been derived for nonlinear networks in discrete time.

The identification of even a single nonlinear system is a challenging task due to the diverse types of nonlinearities and models that can be used to reproduce the behavior of the system \cite{schoukens2019nonlinear}. In fact, the choice of the model might depend on the type of application and some particular models could not be well suited for several systems \cite{janczak2004identification,nelles2020nonlinear,paduart2010identification}. At the level of nonlinear networks, the complexity increases considerably since essentially the identification of nonlinear functions that represent the dynamics of edges or nodes must be performed without disconnecting the network. In this case, identifiability conditions could be even more important than in the linear case since in general the experiments for nonlinear identification are more complex and expensive, so that the knowledge of which nodes should be excited and measured is fundamental. Although the analysis of identifiability is based on all the information that could theoretically be obtained from measurements (even if it would not necessarily be practical to obtain so much information), it can give us a good intuition for the outcome of experiments. If an edge is not identifiable based on such perfect information, then it can certainly not be identified with partial information. On the other hand, if an edge can be identified in the ideal situation, it could be possible that with appropriate experiments we manage to obtain an approximation of the edge.

The class of functions considered in the identifiability of nonlinear networks is important for the derivation of conditions that guarantee the identification. The nonlinear dynamics of real networks can be modeled by functions of different nature like trigonometric in coupled oscillators \cite{dorfler2014synchronization} or non-differentiable in neural networks \cite{aggarwal2018neural}. Clearly, the identifiability should be analyzed in a specific class of functions and an appropriate delimitation of this class is fundamental. If the class of functions is too restrictive, it could be easier to guarantee identifiability but the type of functions could not match real dynamical models. If the class of functions is overly general, identifiability might be very restrictive since many different functions could satisfy the information obtained with measurements. Nevertheless, we will derive strong results for the identifiability of nonlinear networks for very general classes of functions.

A preliminary version of this work was presented in \cite{vizuete2023nonlinear} where identifiability conditions for directed acyclic graphs (DAGs) characterized by nonlinear static interactions were derived in the full excitation case to highlight the differences with linear networks. By contrast, in this work, we derive identifiability conditions in the full excitation case for more complex nonlinear networks where the edges are characterized by nonlinear dynamical systems (i.e., involving memory), and cycles are allowed. In this way, our results can be applied to real networks where the dynamics are one of the main characteristics of the edges and the presence of cycles is common. Finally, we extend the class of analytic functions used in \cite{vizuete2023nonlinear} to continuously differentiable functions and twice continuously differentiable functions.

The remainder of this article is organized as follows.
In Section~\ref{sec:problem_formulation}, we present the dynamical model associated with each node and the notions of identifiability at the level of edges and networks. In Section~\ref{sec:DAG}, we study the identifiability problem in DAGs. For path graphs and trees, we analyze the identifiability problem in a smaller class of functions where each function is continuously differentiable and the static part is removed, and we show that the measurement of the sink is necessary and sufficient for identifiability. In the case of general DAGs, we discard linear functions for the identifiability problem and we prove that the measurement of the sinks is necessary and sufficient for the identifiability of twice continuously differentiable functions. Section~\ref{sec:general_digraphs} focuses on the case of general digraphs where cycles are allowed. By considering that the network is at rest at the initial time, we show that by measuring a node of all the sinks of the condensation digraph, we guarantee the identifiability of any digraph. 
Finally, conclusions and future perspectives for the identifiability of nonlinear networks are exposed in Section~\ref{sec:conclusions}.

\section{Problem formulation}\label{sec:problem_formulation}

\subsection{Notation}\label{sec:notation}

Scalars are denoted by $x\in\R$ and vectors are denoted by $\mathbf{x}\in\R^n$. The vector of an appropriate size constituted of only zeros is denoted by $\mathbf{0}$. 
The value of a general input or output $x$ at the time instant $k$ is denoted by $x^k$. The shift operator acting on a vector $\x=(x_1,\ldots,x_n)$ corresponds to a shift of coordinates and it is denoted by $\tau^p\x=(x_{1+p},\ldots,x_{n+p})$, where we assume that $\x$ is well defined for coordinates $x_i$ with $i>n$ \footnote{In this work, the shift operator will be used to shift coordinates in time.  For instance, for a  vector $\x_1=(x_1^{k-1},x_1^{k-2})$, the result after the application of the shift operator $\tau^1$ will be $\tau^1\x_1=(x_1^{k-2},x_1^{k-3})$. Since the number of all the possible delayed signals that could appear in a vector depends on the network topology, delays of other edge functions and even the time instant of measurement of a node (see Subsection~\ref{sec:model_class}), we assume that the signals are well defined at all time instants considered in the analysis.}.

Next, we introduce the following definitions associated with graphs.

\begin{itemize}
    \item A \emph{source} is a node with no incoming edges, and a \emph{sink} is a node with no outgoing edges.
    \item A node $i$ is an \emph{in-neighbor} of node $j$ if there exists a directed edge from $i$ to $j$ denoted by $(j,i)$, and a node $i$ is an \emph{out-neighbor} of node $j$ if there exists a directed edge from $j$ to $i$ denoted by $(i,j)$.
    \item A \emph{walk} is a series of adjacent directed edges. 
    \item A \emph{path} is a walk that never passes twice through the same node.
    \item The \emph{diameter} of a digraph is the length of the shortest path between the most distant nodes.
    \item A \emph{cycle} or a loop in a digraph is a series of adjacent directed edges in which only the first and last vertices are equal.
    \item A digraph $G$ is \emph{weakly connected} if the undirected version of the digraph (i.e., direction of edges are ignored) is connected.
    \item A \emph{self-loop} is an edge that connects a vertex to itself.
    \item A \emph{path graph} is a digraph whose vertices can be listed in the order $1,\ldots,n$ such that the edges are $(i+1,i)$ with $i=1,\ldots,n-1$.
    \item A \emph{tree} is a digraph that has no loops even if we change the edges directions.
    \item An \emph{arborescence} is a digraph where for a node $u$ called the root and any other node $v$, there is exactly one path.
    \item A \emph{directed acyclic graph (DAG)} is a digraph without loops. (See Fig.~\ref{fig:DAG_volterra}).
    \item An \emph{induced subgraph} $G_S$ of a digraph $G$ is a subgraph formed by a subset $S$ of the vertices of $G$ and all the edges of $G$ that have both endpoints in $S$. (See Fig.~\ref{fig:induced_graph}). 
    \item A \emph{strongly connected component} $H$ of a digraph $G$ is a subgraph that is strongly connected such that any other subgraph of $G$ strictly containing $H$ is not strongly connected. (See Fig.~\ref{fig:general_digraph}).
    \item The \emph{condensation digraph} $C(G)$ of a digraph $G$ is a digraph where each strongly connected component is replaced by a node and all the edges from one strongly connected component to another are replaced by a single edge. The condensation digraph is acyclic. (See Fig.~\ref{fig:general_digraph}).
    \item A \emph{$k$-partite digraph} is a digraph whose vertices can be partitioned into $k$ different sets such that for each set, there is no edge connecting two nodes. (See Fig.~\ref{fig:general_digraph}).
    \item $\NN_i$ is the set of in-neighbors of a node $i$ without including $i$, and $\NN_i^i$ is the set of in-neighbors of a node $i$ \linebreak including $i$.
    \item $\NN_i^p$ is the set of nodes with a path to node $i$.
    \item $\NN^m$ is the set of measured nodes.
\end{itemize}

\subsection{Model class}\label{sec:model_class}

We consider the identifiability problem in a network where the dynamics are additive on the edges and have finite memory. This particular setting is a special case of general nonlinear network models \cite{zanudo2017structure} and can be found in several applications like information distortion caused by transmission, sensors or privacy protection \cite{wang2022transmission}, nonlinear consensus algorithms including interval \cite{fontan2020interval} and  discarded consensus \cite{liu2012discarded}, and complex compartmental systems characterized by nonlinear flows. Moreover, this additive nonlinear model allows us to exploit the nonlinearity of the functions for the derivation of identifiability conditions, which is a key step in the extension of the results for more general nonlinear models. Finally, notice that the model is a nonlinear version of the linear model extensively analyzed in the identifiability of networks with full excitations \cite{weerts2018identifiability,hendrickx2019identifiability,vanwaarde2020necessary}.

The network is characterized by a weakly connected digraph $G=(V,E)$ composed by a set of nodes $V=\{ 1,\ldots,n\}$ and a set of edges $E\subseteq V\times V$. 
The output of each node $i$ in the network is given by: 
\begin{equation}\label{eq:nonlinear_model}
y_i^k=\sum_{j\in \mathcal{N}_i}f_{i,j}(y_j^{k-0},y_j^{k-1},y_j^{k-2},\ldots,y_j^{k-m_j})+u_i^{k-1}, 
\end{equation}
where $y_i^k$ is the state of the node $i$, $u_i^{k-1}$ is an external excitation signal, $\NN_i$ is the set of in-neighbors of node $i$ without including $i$, $f_{i,j}$ is a nonzero nonlinear function depending on past outputs of the neighbors $j$ of the node $i$, and each delay $m_j\in\mathbb{Z}^{\ge}$ is finite (i.e., finite memory).
The model \eqref{eq:nonlinear_model} is causal if the nonlinear functions $f_{i,j}$ do not depend on $y_j^{k-0}$ (i.e., $f_{i,j}(y_j^{k-1},y_j^{k-2},\ldots,y_j^{k-m_j})$ with $m_j\in\mathbb{Z}^+$),   and along this work we will only consider the identifiability of causal models (real-world dynamics). The particular case of a delay $m_j=0$ will be used only for the unfolded digraph introduced in Section~\ref{sec:general_digraphs}, which is a mathematical construction used in the proof of Theorem~\ref{thm:general_digraph}. The model \eqref{eq:nonlinear_model} implies that the output of a node $i$ in a network is determined by its own excitation signal $u_i$ and the past outputs of its in-neighbors $y_j$. The dynamics that determine the relationship between the node $i$ and an in-neighbor $j$ are given by a nonlinear function $f_{i,j}$ located in the corresponding edge.
Furthermore, the model \eqref{eq:nonlinear_model} corresponds to the full excitation case where all the nodes in the network can be excited through the inputs $u_i$. When the functions $f_{i,j}$ are linear, the identifiability problem with full excitations has been studied in \cite{hendrickx2019identifiability}. In this work, we will consider continuously and twice continuously differentiable functions, whose smoothness will allow us to derive conditions for the identifiability of nonlinear networks.

\begin{assumption}\label{ass:full_excitation}
    The graph $G$ associated with the network is known, where an edge $(i,j)\in E$ implies $f_{i,j}\not\equiv  0$.
\end{assumption}

Assumption~\ref{ass:full_excitation} implies that we know which nodes are connected by nonzero functions. By using a definition analogous to the adjacency matrix of a digraph, we consider that the absence of an edge $(i,j)\not\in E$ implies a zero function $f_{i,j}\equiv 0$, and the case $f_{i,j}\equiv 0$ is explicitly excluded if there is an edge $(i,j)\in E$. The identifiability analysis will be based on the notion of identifiability in system identification \cite{ljung1999system}, where the objective is to determine if there is a unique set of local dynamics $f_{i,j}$ that leads to a global behavior. Similarly to \cite{hendrickx2019identifiability,legat2020local,legat2021path}, we assume that in an ideal scenario the relations between excitations and outputs of the nodes have been perfectly identified (global behavior), and all nodes are excited. In the first part, we focus on networks that do not contain any cycle (i.e., DAGs). Due to the absence of cycles and finite memory, the function $F_i$ associated with the measurement of a node $i$, depends on a finite number of inputs
\begin{multline}\label{eq:function_Fi}
    y_i^k=u_i^{k-1}+F_i(u_1^{k-2},\ldots,u_1^{k-T_1},\ldots,u_{n_i}^{k-2},\ldots,u_{n_i}^{k-T_{n_i}}),\\
    1,\dots,n_i\in \mathcal{N}_i^p,
\end{multline}
where $\NN_i^p$ is the set of nodes with a path to node $i$. The function $F_i$ is implicitly defined by \eqref{eq:nonlinear_model} and the inputs in $F_i$ correspond to the nodes that have a path to the measured node~$i$. With a slight abuse of notation, we use the superscript in the function $F_i^{(s)}$ to indicate that all the inputs in \eqref{eq:function_Fi} are delayed by $s\in \mathbb{Z}^+$: $F_i^{(s)}=F_i(u_1^{k-2-s},\ldots,u_{n_i}^{k-T_{n_i}-s})$. 

\begin{figure}[!t]
    \centering
    \begin{tikzpicture}
    [
roundnodes/.style={circle, draw=black!60, fill=black!5, very thick, minimum size=1mm},roundnode/.style={circle, draw=white!60, fill=white!5, very thick, minimum size=1mm}
]
\node[roundnodes](node1){1};
\node[roundnodes](node2)[above=of node1,yshift=2mm,xshift=1.5cm]{2};
\node[roundnodes](node3)[right=2.5cm of node1]{3};
\draw[-{Classical TikZ Rightarrow[length=1.5mm]}] (node1) to node [left,swap,yshift=1mm] {$f_{2,1}$} (node2);
\draw[-{Classical TikZ Rightarrow[length=1.5mm]}] (node2) to node [right,swap,yshift=1mm] {$f_{3,2}$} (node3);
\draw[-{Classical TikZ Rightarrow[length=1.5mm]}] (node1) to node [below,swap] {$f_{3,1}$} (node3);

\node[roundnode](u1)[above=of node1,yshift=-12mm,xshift=-12mm]{$u_1$};
\node[roundnode](u2)[above=of node2,yshift=-12mm,xshift=-12mm]{$u_2$};
\node[roundnode](u3)[above=of node3,yshift=-12mm,xshift=12mm]{$u_3$};

\draw[gray,dashed,-{Classical TikZ Rightarrow[length=1.5mm]}] (u1) -- (node1);
\draw[gray,dashed,-{Classical TikZ Rightarrow[length=1.5mm]}] (u2) -- (node2);
\draw[gray,dashed,-{Classical TikZ Rightarrow[length=1.5mm]}] (u3) -- (node3);

\end{tikzpicture}
    \caption{DAG where the function associated with the measurement of the node $3$ only depends on a finite number of inputs.}
    \label{fig:DAG_volterra}
\end{figure}
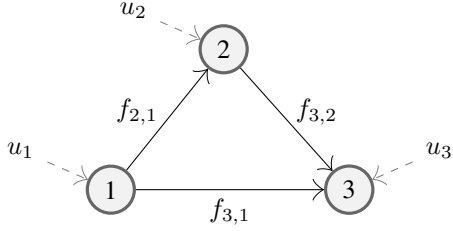

\begin{example}[Finite inputs]\label{ex:finite_delays}
    Let us consider the DAG in Fig.~\ref{fig:DAG_volterra} with nonlinear functions of the form $f_{3,2}(y_2^{k-1})$,  $f_{3,1}(y_1^{k-1},y_1^{k-2})$ and $f_{2,1}(y_1^{k-1})$. The measurement of the node 3 provides an output of the form
    \begin{align}
        y_3^k&=u_3^{k-1}+f_{3,2}(y_2^{k-1})+f_{3,1}(y_1^{k-1},y_1^{k-2})\nonumber\\
        &=u_3^{k-1}+f_{3,2}(u_2^{k-2}+f_{2,1}(u_1^{k-3}))+f_{3,1}(u_1^{k-2},u_1^{k-3})\label{eq:finite_delays},
    \end{align}
    so that the function $F_3$ associated with the measurement is a function of a finite number of variables $F_3(u_1^{k-2},u_1^{k-3},u_2^{k-2})$.
\end{example}

The model \eqref{eq:function_Fi} considered for the identifiability is of the type Nonlinear Finite Impulse Response (NFIR) where the output is a nonlinear function only of the excitation signals (i.e., inputs) \cite{ramirez2021nonlinear,pillonetto2025deep,nelles2020nonlinear}. When the functions $f_{i,j}$ are linear, the model \eqref{eq:nonlinear_model} corresponds to the affine dynamics that are extensively used in network systems \cite{ravazzi2014ergodic}. By considering that NFIR models can be accurately estimated by using different techniques like artificial neural networks \cite{ramirez2021nonlinear}, least squares \cite{makila2005lti}, etc., we make the following assumption.

\begin{assumption}\label{ass:knowledge_Fi}
If the node $i$ is measured, the function $F_i$ is known.
\end{assumption}

Assumption~\ref{ass:knowledge_Fi} implies a knowledge of the global behavior $F_i$ given by the relationships between the inputs and outputs. However, the knowledge of the function $F_i$ should not be taken in the context of an algorithm to actually compute the functions $f_{i,j}$. That is why errors during a potential identification process are not considered for the analysis. The objective of this work is
to determine which nodes need to be  measured
to identify all the nonlinear functions in the network. Our
aim is to determine the possibility of identification and not
to develop an algorithm or verify the accuracy of other
identification methods.

\subsection{Identifiability}

We first define the relationships between the measurements of the nodes and the functions $f_{i,j}$.

\begin{definition}[Set of measured functions]\label{def:set_measured_functions}
    Given a set of measured nodes $\mathcal{N}^m$, the totally ordered set of measured functions $(F(\mathcal{N}^m),\le)$ associated with $\mathcal{N}^m$ is given by:
    $$
    F(\mathcal{N}^m):=\{F_i\;|\;i\in \mathcal{N}^m\},
    $$
    with $F_i\le F_j$ if $i\le j$.
\end{definition}
Since the identifiability problem can be hard or unrealistic for general functions, we restrict the problem to  a certain class of functions $\F$: the functions associated with the edges belong to $\F$ and the identifiability is considered only among the functions belonging to $\F$. The different classes of functions will be defined depending on the different types of network topologies considered for the analysis.  

We say that a digraph $G$ and a partially ordered set of functions $(\{ f \},\le)=\{f_{i,j}\in \F\;|\;(i,j)\in E\}$ with $f_{i,j}\le f_{m,n}$ if $i\le m$ and $j\le n$, generate $F(\mathcal{N}^m)$ if the functions $F_i\in F(\mathcal{N}^m)$ are obtained \linebreak through \eqref{eq:nonlinear_model}. Notice that the ordering of the functions depends on the labeling of the nodes, which could be arbitrary and does not play a significant role. Therefore, in the rest of the paper, we will refer to $F(\mathcal{N}^m)$ and $\{ f \}$ only as a set of measured functions and a set of functions respectively.

\begin{definition}[Identifiability]\label{def:math_identifiability}
    Given a set of functions $\{ f \}=\{f_{i,j}\in \F\;|\;(i,j)\in E\}$ that generates $F(\NN^m)$ and any other set of functions $\{ \tilde f \}=\{\tilde f_{i,j}\in\F\;|\;(i,j)\in E\}$ that generates $\tilde F(\NN^m)$. An edge $f_{i,j}$ is identifiable in a class $\F$ if $
    F(\NN^m)=\tilde F(\NN^m)
    $ implies that $f_{i,j}=\tilde f_{i,j}$ for any $\tilde f_{i,j}$.
    A network $G$ is identifiable in a class $\F$ if 
    $
    F(\NN^m)=\tilde F(\NN^m)
    $
    implies that
    $
    \{ f \}=\{ \tilde f \}
    $ for any $\{ \tilde f \}$.
\end{definition}

According to Definition~\ref{def:math_identifiability}, in a network $G$, an edge $f_{i,j}$ is identifiable in a class $\F$ if given a set of measured functions $F(\mathcal{N}^m)$, every set of functions in $\F$ leading to $F(\mathcal{N}^m)$ has the same $f_{i,j}$ \cite{vizuete2023nonlinear}. A network $G$ is identifiable in a class $\F$ if all the edges are identifiable in the \linebreak class $\F$ \cite{vizuete2023nonlinear}.

Although Assumption~\ref{ass:knowledge_Fi} seems strong, it is clear that if a function $f_{i,j}$ cannot be identified with $F_i$, it is unidentifiable based on the measurement of this node. However, if the function $f_{i,j}$ is identifiable based on $F_i$, it is expected that a good approximation of $F_i$ with experiments, may eventually provide a good approximation of the function $f_{i,j}$. Thus, the possibility of identifying $f_{i,j}$ could be determinant for the approximation of these functions based on measurements and experiments \cite{ljung1999system}.

\section{Directed acyclic graphs (DAGs)}\label{sec:DAG}

We now begin by deriving identifiability conditions  for networks that do not contain any cycle (i.e., DAGs), considering specific classes of functions.

DAGs encompass a large number of graph topologies that present specific characteristics that can be used for the derivation of conditions for identifiability \cite{cheng2024identifiability,mapurunga2022excitation}. Since sources and sinks are important nodes in DAGs, we first provide a result similar to the static case in \cite{vizuete2023nonlinear} about the information that we can obtain from the measurements of these nodes. This result is not exclusively related to DAGs since sinks and sources can also exist in more general digraphs including cycles \footnote{Notice that the proof of Proposition~\ref{prop:sinks_sources} is valid for any $k$, and therefore, also holds for any digraph according to Definitions~\ref{def:new_measured_nodes} and \ref{def:new_math_identifiability} introduced in Section~\ref{sec:general_digraphs}.}.

\begin{proposition}[Sinks and sources]\label{prop:sinks_sources}
The measurement of the sources do not affect the identifiability of the network. For any sink $i$, its incoming edges are not identifiable if $i$ is not measured.

\end{proposition}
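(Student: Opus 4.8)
The plan is to treat the two assertions separately, each resting on the \emph{locality} of the model \eqref{eq:nonlinear_model}: an edge function $f_{i,j}$ enters only the output $y_i$, and $y_i$ in turn feeds only the outputs of the out-neighbors of $i$.

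For the statement about sources, first I would observe that a source $s$ has $\NN_s=\emptyset$, so the sum in \eqref{eq:nonlinear_model} is empty and $y_s^k=u_s^{k-1}$ identically. In the notation of \eqref{eq:function_Fi} this means $F_s\equiv 0$ irrespective of the chosen set of functions $\{f\}$. Consequently, for any two admissible sets $\{f\}$ and $\{\tilde f\}$ one has $F_s=\tilde F_s=0$ automatically, so the equality $F_s=\tilde F_s$ imposes no constraint. Hence adding or removing a source from $\NN^m$ changes none of the constraints $F(\NN^m)=\tilde F(\NN^m)$ that govern identifiability, and the identifiability of every edge is left unchanged.

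For the statement about sinks, let $i$ be a sink with $i\notin\NN^m$. The key point is that, since $i$ has no outgoing edges, $i\notin\NN_\ell$ for every $\ell$, so $y_i$ never appears on the right-hand side of \eqref{eq:nonlinear_model} for any node $\ell\neq i$; equivalently, each incoming function $f_{i,j}$ (with $j\in\NN_i$) influences only $y_i$, hence only $F_i$. I would then build an explicit indistinguishable pair: take any admissible $\{f\}$ and define $\{\tilde f\}$ to agree with $\{f\}$ everywhere except on one incoming edge, where $f_{i,j}$ is replaced by a different nonzero $\tilde f_{i,j}\in\F$; the same construction applies to each incoming edge of $i$. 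Because the only altered output is $y_i$ and $i$ is unmeasured, $\tilde F_\ell=F_\ell$ for all $\ell\in\NN^m$, so $\tilde F(\NN^m)=F(\NN^m)$ while $\tilde f_{i,j}\neq f_{i,j}$; by Definition~\ref{def:math_identifiability} the edge is not identifiable.

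The argument is mostly bookkeeping, and the only genuine subtlety is justifying the locality claims rigorously: I must confirm that replacing a single $f_{i,j}$ leaves every \emph{measured} $F_\ell$ literally unchanged, which requires the sink property (no path from $i$ to any measured node). A secondary point to address is that the class $\F$ must contain at least two distinct nonzero functions of the appropriate number of arguments, so that a valid $\tilde f_{i,j}\neq f_{i,j}$ exists; this holds for all the classes used later, so it does not restrict the statement.
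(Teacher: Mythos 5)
Your proof is correct and follows essentially the same route as the paper's: sources yield $y_\ell^k=u_\ell^{k-1}$ and hence impose vacuous constraints, and for an unmeasured sink one perturbs its incoming edge functions, which leave every measured $F_\ell$ unchanged because the sink has no outgoing edges. Your explicit remark that $\F$ must contain two distinct admissible nonzero functions is a detail the paper leaves implicit, but it does not change the argument.
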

\begin{proof}
First, the measurement of any source $\ell$ generates the output $y_\ell^k=u_\ell^{k-1}$, which does not provide any information about functions associated with edges in the network. Then, let us consider a sink $i$ with $\aNN$ in-neighbors. The measurement of this sink provides the output
\begin{align*}
y_i^k&=u_i^{k-1}+F_i\\
&=u_i^{k-1}+\sum_{j\in\NN_i}f_{i,j}(y_j^{k-1},\ldots,y_j^{k-m_j}).  
\end{align*}
If the sink $i$ is not measured, the function $F_i\not\in F(\NN^m)$. For any set $\{ f \}$ that generates $F(\NN^m)$, we can obtain another set $\{ \tilde f \}$ where all the edges are the same except the functions $f_{i,j}$, that will also generate $F(\NN^m)$, since these functions $f_{i,j}$ do not affect any of the functions in $F(\NN^m)$. Thus, if the sink $i$ is not measured, the incoming edges $f_{i,j}$ are not identifiable.
\end{proof}

\subsection{Identifiability of general nonlinear functions}

We start by considering the identifiability problem in the class of all functions.

\begin{definition}[Class of functions $\Fzero$]
Let $\Fzero$ be the class of all  functions $f:\R^m\to\R$.
\end{definition}

Notice that the dimension $m$ corresponds to the function $f_{i,j}$ in the network with the largest number of arguments (i.e., number of past inputs).

Unfortunately, the following proposition shows that even in the trivial case when we measure all the nodes, there are some important network topologies where the identifiability problem is unsolvable in any class invariant by adding constants, which includes the class $\Fzero$. 

\begin{proposition}[Unidentifiability]\label{prop:DAG_unidentifiable}
If a DAG contains two (or more) edges arriving at a same node, then none of them are identifiable in any class invariant by adding constants. 
\end{proposition}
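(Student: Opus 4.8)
The plan is to exploit the fact that the class $\Fzero$ contains constant functions, so that a constant can be moved from one incoming edge of a node to another without altering any measurement. Let $i$ be a node with $\aNN\ge 2$ and fix two of its in-neighbors, say $a,b\in\NN_i$. Starting from an arbitrary set $\{f\}$ generating $F(\NN^m)$, I would build a second set $\{\tilde f\}$ that agrees with $\{f\}$ on every edge except
$$\tilde f_{i,a}=f_{i,a}+c,\qquad \tilde f_{i,b}=f_{i,b}-c,$$
for a suitable constant $c\in\R$. Since constants are analytic, both modified functions remain in $\Fzero$, and the decisive observation is that in the summand of \eqref{eq:nonlinear_model} defining $y_i^k$ the two shifts cancel exactly: $\tilde f_{i,a}(\cdot)+\tilde f_{i,b}(\cdot)=f_{i,a}(\cdot)+f_{i,b}(\cdot)$, irrespective of the arguments.

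Next I would verify that $\{\tilde f\}$ produces the same output at every node, so that $\tilde F(\NN^m)=F(\NN^m)$ for any measurement set, including the full one. This follows by induction along a topological ordering of the DAG for fixed excitations $u$: sources satisfy $y_\ell^k=u_\ell^{k-1}$ and are unaffected; for any non-source $\ell\neq i$ the edges satisfy $\tilde f_{\ell,j}=f_{\ell,j}$ and the in-neighbor outputs agree by the induction hypothesis, so $y_\ell^k$ is unchanged; and for $\ell=i$ the cancellation above yields the same $y_i^k$ despite the two modified edges. Since all outputs coincide, every $F_\ell$, and hence the whole measured set, is identical, whereas $\tilde f_{i,a}\neq f_{i,a}$ and $\tilde f_{i,b}\neq f_{i,b}$; by Definition~\ref{def:math_identifiability} neither edge is identifiable, even when all nodes are measured.

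Finally, to respect Assumption~\ref{ass:full_excitation} I would choose $c$ so that the perturbed functions stay nonzero: $f_{i,a}+c\equiv 0$ can happen only if $f_{i,a}$ is the constant $-c$, and similarly for $f_{i,b}$, so at most two values of $c$ are forbidden, and together with $c\neq 0$ this still leaves infinitely many admissible choices. To reach the full conclusion that \emph{none} of the incoming edges of $i$ is identifiable, I would repeat the construction pairing each in-neighbor with another one, which is always possible precisely because $\aNN\ge 2$. This last point is also where the hypothesis is essential and is the one I would be most careful about: the compensating constant must be absorbed by a second edge entering the same node, since with a single in-neighbor the shift would change $y_i^k$ and propagate downstream, leaving no cancellation available. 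Beyond this, the only routine care is the bookkeeping of the nonzero constraint and checking that adding constants keeps the functions inside $\Fzero$.
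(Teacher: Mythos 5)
Your proposal is correct and follows essentially the same route as the paper: shifting a constant $\gamma$ (your $c$) from one incoming edge of $i$ to another, so that the sum in \eqref{eq:nonlinear_model} is unchanged and all measured functions coincide even under full measurement. Your additional bookkeeping (the topological-order induction, the choice of $c$ avoiding the at most two values violating Assumption~\ref{ass:full_excitation}, and the explicit pairing of every in-neighbor) only makes rigorous the steps the paper leaves implicit.
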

\begin{proof}
     Let us consider an arbitrary DAG with a node $i$ with 2 or more in-neighbors. Let us assume that we measure all the nodes (i.e., $\NN^m=V$), which is all the information that we can get from a network. Since the function $F_i\in F(\NN^m)$, we obtain:
 \begin{align*}
    y_i^k&=u_i^{k-1}+F_i\\
    &=u_i^{k-1}\!+\!\sum_{j\in \NN_i} \! f_{i,j}(u_j^{k-2}\!+\!F_j^{(1)},\ldots,u_j^{k-m_j-1}\!+\!F_j^{(m_j)}).  
 \end{align*}
 Let us consider two arbitrary in-neighbors $p$ and $q$ of the node $i$ and set the functions $\tilde f_{i,p}=f_{i,p}+\gamma$ and  $\tilde f_{i,q}=f_{i,q}-\gamma$, with $\gamma\neq 0$. Notice that $F_i$ can also be obtained with the functions  $\tilde f_{i,p}$ and $\tilde f_{i,q}$, which implies that these edges cannot be identified.
\end{proof}

\begin{figure}
    \centering
    \begin{tikzpicture}
    [
roundnodes/.style={circle, draw=black!60, fill=black!5, very thick, minimum size=1mm},
roundnode/.style={circle, dashed, draw=black!60, fill=white!5, very thick, minimum size=3cm},
punto/.style={circle, dashed, draw=white!60, fill=white!5, very thick, minimum size=1mm}
]

\node[roundnode](node1){};
\node[roundnodes](node2)[above=of node1,yshift=-1.5cm,xshift=3cm]{$p$};
\node[roundnodes](node4)[right=3cm of node1]{$i$};
\node[roundnodes](node3)[below=of node1,yshift=1.5cm,xshift=3cm]{$q$};
\node[punto](node5)[left=2cm of node2]{};
\node[punto](node6)[left=2cm of node3]{};

\draw[-{Classical TikZ Rightarrow[length=1.5mm]}] (node5) to node [left,swap,yshift=2.5mm] {} (node2);
\draw[-{Classical TikZ Rightarrow[length=1.5mm]}] (node2) to node [left,swap,yshift=0mm,xshift=-1mm] {$f_{i,p}$} (node4);
\draw[-{Classical TikZ Rightarrow[length=1.5mm]}] (node2) to node [right,swap,yshift=0mm,xshift=2mm] { $\tilde f_{i,p}=f_{i,p}+\gamma$} (node4);
\draw[-{Classical TikZ Rightarrow[length=1.5mm]}] (node6) to node [left,swap,yshift=-2.5mm] {} (node3);
\draw[-{Classical TikZ Rightarrow[length=1.5mm]}] (node3) to node [right,swap,yshift=-0mm,xshift=2mm] { $\tilde f_{i,q}=f_{i,q}-\gamma$} (node4);

\draw[-{Classical TikZ Rightarrow[length=1.5mm]}] (node3) to node [left,swap,yshift=-0mm,xshift=-1mm] {$f_{i,q}$} (node4);

\node[roundnode](node7){\Large DAG};
\end{tikzpicture}
    \caption{If a DAG has at least a node $i$ with two or more in-neighbors, then it is not identifiable in the class $\Fzero$.}
    \label{fig:unidentified_network}
\end{figure}
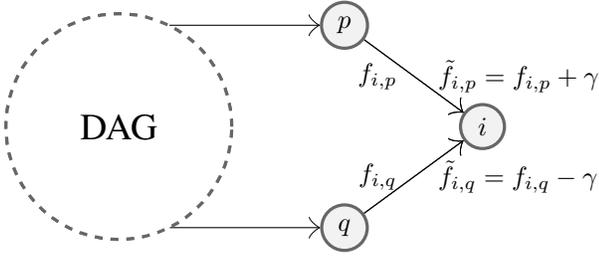

Proposition~\ref{prop:DAG_unidentifiable} implies that DAGs having nodes with two or more in-neighbors cannot be identified (see for instance Fig.~\ref{fig:unidentified_network}), which restricts considerably the types of networks that can be identified in the class $\Fzero$. In the following proposition we show that the identifiability of graphs where all the nodes have only 1 in-neighbor (i.e., an arborescence) is possible but requires the measurement of all the nodes except the source. 

\begin{proposition}[Arborescence]\label{prop:path_general_suff}
    An arborescence is identifiable in the class $\Fzero$ if and only if all the nodes are measured except possibly the source.
\end{proposition}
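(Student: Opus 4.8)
The plan is to exploit the defining property of an arborescence: it has a single source $r$ (the root), and every other node $i$ possesses exactly one in-neighbor, its parent $p(i)$, hence exactly one incoming edge $f_{i,p(i)}$. That the root's measurement is irrelevant is immediate from Proposition~\ref{prop:sinks_sources}, so the statement reduces to two implications: (i) if every non-source node is measured then each $f_{i,p(i)}$ is identifiable (sufficiency), and (ii) if some non-source node is unmeasured then the network is not identifiable (necessity).

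For sufficiency I would argue edge by edge. Fix a non-source node $i$ with parent $p=p(i)$; by \eqref{eq:nonlinear_model} its measurement gives $y_i^k = u_i^{k-1} + f_{i,p}(y_p^{k-1},\ldots,y_p^{k-m})$. Since $i$ is measured, $F_i$ is known by Assumption~\ref{ass:knowledge_Fi}, and since $p$ is either the root (where $y_p^k=u_p^{k-1}$) or a measured non-source node, the map sending the inputs to the argument vector $(y_p^{k-1},\ldots,y_p^{k-m})$ is known. The crux is a reachability lemma: because the model is causal and the graph acyclic, the input $u_p^{k-1-s}$ enters $y_p^{k-s}$ additively and enters none of the other relevant lags (a node's own excitations never feed back into its own output). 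Hence, for fixed remaining inputs, this map is a pure translation in $(u_p^{k-2},\ldots,u_p^{k-m-1})$ and therefore a bijection onto $\R^m$. The argument of $f_{i,p}$ can thus be driven to any point of $\R^m$, and reading off $y_i^k - u_i^{k-1}$ recovers $f_{i,p}$ on its entire domain -- so completely, in fact, that analyticity is not even needed here, although every function produced stays in $\Fzero$.

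For necessity I would use the contrapositive together with the constant-shift idea of Proposition~\ref{prop:DAG_unidentifiable}. Suppose a non-source node $w$ is unmeasured; I would exhibit a second admissible set of functions differing in $f_{w,p(w)}$ yet producing the same measured outputs. Set $\tilde f_{w,p(w)} = f_{w,p(w)}+\gamma$ with $\gamma\neq 0$, and for every child $c$ of $w$ set $\tilde f_{c,w}(z_1,\ldots,z_m)=f_{c,w}(z_1-\gamma,\ldots,z_m-\gamma)$. Because $p(w)$ is an ancestor and so is unaffected, adding $\gamma$ yields $\tilde y_w^k = y_w^k+\gamma$ for all $k$; the compensating argument shift then gives $\tilde y_c^k = u_c^{k-1}+f_{c,w}(y_w^{k-1},\ldots)=y_c^k$, so every child's output, and inductively every descendant's and every node outside the subtree rooted at $w$, is unchanged. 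As $w$ itself is unmeasured, $F(\NN^m)=\tilde F(\NN^m)$ while $\tilde f_{w,p(w)}\ne f_{w,p(w)}$, proving $f_{w,p(w)}$ unidentifiable; choosing $\gamma$ so that $f_{w,p(w)}+\gamma\not\equiv 0$ keeps all functions in $\Fzero$ and nonzero, and the same construction covers the case where $w$ is a leaf (no children), recovering Proposition~\ref{prop:sinks_sources}.

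The step I expect to be the main obstacle is the reachability lemma underpinning sufficiency: verifying rigorously that the controllable inputs of $p$ sweep the argument of $f_{i,p}$ over all of $\R^m$. This is where causality (so that $u_p^{k-1-s}$ appears only in $y_p^{k-s}$ among the relevant lags) and acyclicity (so that $p$'s own inputs never re-enter its own output) must be invoked precisely; once the map is seen to be a translation, surjectivity and the pointwise recovery of $f_{i,p}$ follow routinely.
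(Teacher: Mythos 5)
Your proof is correct and takes essentially the same route as the paper's: necessity via the constant-offset construction $\tilde f_{w,p(w)}=f_{w,p(w)}+\gamma$ compensated by shifting the children's arguments (the paper uses exactly $\tilde f_{i,i-1}(\x)=f_{i,i-1}(\x)+\gamma$ and $\tilde f_{i+1,i}(\x)=f_{i+1,i}(x_1-\gamma,\ldots,x_{m_i}-\gamma)$), and sufficiency via the observation that the parent's own excitations translate the argument of $f_{i,p}$ over all of $\R^{m}$ while the measured parent's function $F_p=\tilde F_p$ pins down the additive offsets. The only cosmetic differences are that the paper sets the ancestors' inputs to zero rather than fixing them arbitrarily, and it phrases the unmeasured-node case for an interior node while deferring leaves to Proposition~\ref{prop:sinks_sources}, exactly as you do.
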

\begin{proof}
    Let us consider an arborescence with $n>2$ nodes and a node $i$, which is neither the source nor a sink \footnote{An arborescence can have several sinks but only one source.}. The output of an out-neighbor of $i$ denoted by $i+1$ is given by
\begin{align}
y_{i+1}^k&=u_{i+1}^{k-1}+F_{i+1}\label{eq:F_ik}\\
&=u_{i+1}^{k-1}+f_{i+1,i}(u_i^{k-2}+F_i^{(1)},\ldots,u_i^{k-m_i-1}+F_i^{(m_i)})\nonumber\\
&=u_{i+1}^{k-1}+f_{i+1,i}(u_i^{k-2}+f_{i,i-1}(u_{i-1}^{k-3}+F_{i-1}^{(2)},\ldots),\ldots, \nonumber\\
&\quad\:\: u_i^{k-m_i-1}+f_{i,i-1}(u_{i-1}^{k-m_i-2}+F_{i-1}^{(m_i-1)},\ldots)),\nonumber
\end{align}
where $i-1$ is the in-neighbor of $i$.
If the node $i$ is not measured, we can consider the functions $\tilde f_{i,i-1}(\x)=f_{i,i-1}(\x)+\gamma$ and $\tilde f_{i+1,i}(\x)=f_{i+1,i}(x_1-\gamma,\ldots,x_{m_i}-\gamma)$ with $\gamma\neq 0$, that generate $\tilde F_{i+1}$ and satisfy $F_{i+1}=\tilde F_{i+1}$ in \eqref{eq:F_ik}, which implies that the arborescence cannot be identified. 

\noindent On the other hand, if we measure all the nodes (i.e., $\NN^m=V$), we know the function $F_j$ associated with any node $j$ in the network. Let us consider a node $i$ with an in-neighbor and we set all the inputs to zero except $u_i$ with the corresponding delays. Then, the measurement of the node~$i$ gives us:
\begin{align}
    y_i^k&=u_i^{k-1}+F_i\label{eq:F_iik}\\
    &=u_i^{k-1}+f_{i,i-1}(u_i^{k-2}+F_{i-1}^{(1)}(\zero),\ldots,\nonumber\\
    &\quad\:\: u_i^{k-m_{i-1}-1}+F_{i-1}^{(m_{i-1})}(\zero)).\nonumber
\end{align}
If there is another function $\tilde f_{i,i-1}$ that generates $\tilde F_i$ and satisfies $F_i=\tilde F_i$ in \eqref{eq:F_iik}, we would have for all $u_i^{k-2},\ldots,u_i^{k-m_{i-1}-1}\in\R$:
\begin{multline*}
f_{i,i-1}(u_i^{k-2}+F_{i-1}(\zero),\ldots,u_i^{k-m_{i-1}-1}+F_{i-1}^{(m_{i-1})}(\zero))=\\
\tilde f_{i,i-1}(u_i^{k-2}+F_{i-1}(\zero),\ldots,u_i^{k-m_{i-1}-1}+F_{i-1}^{(m_{i-1})}(\zero)),    
\end{multline*}
which implies that $f_{i,i-1}=\tilde f_{i,i-1}$ because all the $F_{i-1}$ are the same, and we thus identify $f_{i,i-1}$. Since the node $i$ was arbitrary, the analysis holds for the other nodes and therefore all the nonlinear functions in the arborescence are identified.
By Proposition~\ref{prop:sinks_sources}, it is never necessary to measure the source. Finally, since each edge between two measured nodes is identified, it is sufficient to measure all the nodes except possibly the source, which does not provide information.
\end{proof}

We can see that even for simple topologies like arborescences, the identifiability problem requires the measurement of all the nodes except the source, which could be impractical due to physical limitations and economic costs.
Furthermore, as Proposition~\ref{prop:DAG_unidentifiable} showed, for more complex graph topologies, even the measurement of all the nodes is not enough to identify all the nonlinear functions in a network. From the proof of Proposition~\ref{prop:DAG_unidentifiable}, we can see that the main problem is the identification of the static part of the nonlinear functions since it requires more information through the measurement of additional nodes.
Therefore, we will consider nonlinear functions where no static part is present (i.e., only a dynamic part), which could be interpreted as the difference with respect to the steady state. In addition, we will consider continuously differentiable functions \footnote{Unfortunately, we need to discard almost everywhere continuously differentiable functions (e.g., ReLU) since the composition of these functions does not necessarily belong to the same class.}, which are smooth enough to guarantee the identifiability of path graphs and trees. 

\begin{definition}[Class of functions $\Fone$]
Let $\Fone$ be the class of functions $f:\R^m\to\R$ with the following properties:
\begin{enumerate}
    \item $f$ is continuously differentiable in $\R^m$.
    \item $f(\zero)=0$.
\end{enumerate}
\end{definition}
The class $\Fone$ covers numerous nonlinear
functions \cite{abramowitz1965handbook}, including polynomial functions which are used for the approximation of continuous functions through the Weierstrass Approximation theorem \cite{llavona1986approximation}.
    
\subsection{Path graphs and trees}\label{sec:paths_trees}

The notion of paths plays an important role in the identifiability problem in networks since they carry information between the excited nodes and the measured nodes. For this reason, in this section, we focus on the identifiability of path graphs. First, we need the following lemma about the identifiability of in-neighbors, which also holds in the linear case (see Theorem~V.1 in \cite{hendrickx2019identifiability}).

\begin{lemma}\label{lemma:sinks}
Let $G$ be a DAG such that the edge $(i,j)$ is the only path from the node $j$ to the node $i$. Then if $i$ is measured, $f_{i,j}$ is identifiable in the class $\Fone$.
\end{lemma}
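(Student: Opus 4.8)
The plan is to exploit the hypothesis that the edge $(i,j)$ is the unique path from $j$ to $i$ in order to isolate, inside the measured function $F_i$, a clean copy of $f_{i,j}$. Since $G$ is a DAG with finite memory, every output is a function of the external excitations alone (sources satisfy $y_\ell^k=u_\ell^{k-1}$ and this dependence propagates along a topological order), so $F_i$ is genuinely a function of the inputs of the ancestors of $i$. The key observation is that the uniqueness of the path forces $j$ to be a non-ancestor of every other in-neighbour $\ell\in\NN_i\setminus\{j\}$: otherwise a walk $j\to\cdots\to\ell\to i$ would give a second path from $j$ to $i$. Hence $u_j$ can enter $y_i$ only through the single term $f_{i,j}$.

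Concretely, I would set all external inputs to zero except $u_j$ (together with its required shifts). Using the defining property $f(\zero)=0$ of the class $\Fone$ and an induction on a topological ordering of $G$, I would first show that any node that is neither $j$ nor reachable from $j$ outputs identically zero; in particular every other in-neighbour $\ell$ of $i$ contributes $f_{i,\ell}(\zero)=0$, and every in-neighbour $p$ of $j$ contributes $f_{j,p}(\zero)=0$, since no cycle $j\to\cdots\to p\to j$ can exist in a DAG. Consequently $y_j^{k-s}=u_j^{k-s-1}$ for each delay, and the measurement collapses to
\begin{equation*}
y_i^k=f_{i,j}\big(u_j^{k-2},\,u_j^{k-3},\,\ldots,\,u_j^{k-m_j-1}\big).
\end{equation*}
Because the shifted values $u_j^{k-2},\ldots,u_j^{k-m_j-1}$ can be assigned arbitrary reals at distinct time instants, this restriction of $F_i$ reproduces $f_{i,j}$ at every point of its domain $\R^{m_j}$.

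To conclude identifiability in the sense of Definition~\ref{def:math_identifiability}, I would take any second set $\{\tilde f\}\subset\Fone$ generating the same $\tilde F(\NN^m)$ with $\tilde F_i=F_i$. Since $\tilde f_{i,j}$ and all the neighbouring $\tilde f$ also satisfy $\tilde f(\zero)=0$, the very same input restriction applied to $\{\tilde f\}$ yields $\tilde y_i^k=\tilde f_{i,j}(u_j^{k-2},\ldots,u_j^{k-m_j-1})$, so that $F_i=\tilde F_i$ forces $f_{i,j}=\tilde f_{i,j}$ pointwise, i.e. $f_{i,j}$ is identifiable. The main obstacle is the bookkeeping of the vanishing terms: I must argue carefully, through the topological induction and the two structural facts above (no other in-neighbour of $i$ is reachable from $j$ by the uniqueness hypothesis, and no in-neighbour of $j$ is reachable from $j$ by acyclicity), that switching off all inputs but $u_j$ genuinely leaves $f_{i,j}$ as the sole surviving contribution to $y_i$.
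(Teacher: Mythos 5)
Your proof is correct and follows essentially the same route as the paper's: set every excitation to zero except $u_j$ (with its delays), use the unique-path hypothesis to guarantee that no other in-neighbour of $i$ nor any ancestor of $j$ can carry the influence of $u_j$, and use the property $f(\zero)=0$ of the class $\Fone$ to collapse the measured output to $f_{i,j}(u_j^{k-2},\ldots,u_j^{k-m_j-1})$, so that $F_i=\tilde F_i$ forces $f_{i,j}=\tilde f_{i,j}$ pointwise. The only difference is presentational: you spell out the vanishing of the remaining terms by a topological induction, whereas the paper states the same facts compactly (the $F_\ell$ do not depend on $u_j$ and all functions vanish at $\zero$).
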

\begin{proof}
The measurement of the node $i$ provides the output:
\begin{equation}\label{eq:F_incoming_edges}
 y_i^k=u_i^{k-1}+ \sum_{\ell\in\NN_i}f_{i,\ell}(u_{\ell}^{k-2}+ F_{\ell}^{(1)},\ldots,u_{\ell}^{k-m_{\ell}-1}+ F_{\ell}^{(m_{\ell})}).
\end{equation}
Since $F_i\in F(\NN^m)$, let us assume that there exists a set $\{\tilde f\}\neq \{ f\}$ such that $F_i=\tilde F_i$, which implies:
\begin{multline}\label{eq:inter_one_edge}
    \sum_{\ell\in\NN_i}f_{i,\ell}(u_{\ell}^{k-2}+ F_{\ell}^{(1)},\ldots,u_{\ell}^{k-m_{\ell}-1}+ F_{\ell}^{(m_{\ell})})=\\\sum_{\ell\in\NN_i}\tilde f_{i,\ell}(u_{\ell}^{k-2}+ \tilde F_{\ell}^{(1)},\ldots,u_{\ell}^{k-m_{\ell}-1}+ \tilde F_{\ell}^{(m_{\ell})}).
\end{multline}
Let us set to zero all the inputs except the inputs of the node $j\in\NN_i$ with their respective delays. Since $(i,j)$ is the only path between $i$ and $j$, the functions $F_\ell$, $\tilde F_\ell$ do not depend on the inputs $u_j$, and given that all the functions are in the class $\Fone$, the equality \eqref{eq:inter_one_edge} becomes:
\begin{multline*}
    f_{i,j}(u_{j}^{k-2}+ F_{j}^{(1)}(\zero),\ldots,u_{j}^{k-m_{j}-1}+ F_{j}^{(m_{j})}(\zero))=\\ \tilde f_{i,j}(u_{j}^{k-2}+ \tilde F_{j}^{(1)}(\zero),\ldots,u_{j}^{k-m_{j}-1}+ \tilde F_{j}^{(m_{j})}(\zero)).
\end{multline*}
which yields for all $u_j^{k-2},\ldots,u_j^{k-m_j-1}\in\R$:
$$
f_{i,j}(u_j^{k-2},\ldots,u_j^{k-m_j-1})=\tilde f_{i,j}(u_j^{k-2},\ldots,u_j^{k-m_j-1}). 
$$
Therefore, we obtain $f_{i,j}=\tilde f_{i,j}$, which implies that the function $f_{i,j}$ is identifiable.
\end{proof}

This leads to the following corollary.

\begin{corollary}\label{corr:sinks}
    For identifiability in the class $\Fone$, the measurement of a node $i$ provides the identifiability of all the incoming edges corresponding to in-neighbors with only one path to $i$.
\end{corollary}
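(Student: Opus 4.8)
The plan is to reduce Corollary~\ref{corr:sinks} directly to Lemma~\ref{lemma:sinks}, which has just been proved. The corollary asserts that measuring a node $i$ identifies every incoming edge $f_{i,j}$ for which the in-neighbor $j$ reaches $i$ by a unique path. Since each such edge is covered individually by the hypothesis of the lemma, the argument is essentially a restatement: I would fix an arbitrary in-neighbor $j\in\NN_i$ with the property that there is exactly one path from $j$ to $i$, and observe that because $j$ is an in-neighbor the edge $(i,j)$ exists and is itself a path of length one from $j$ to $i$. The uniqueness of the path then forces this single edge $(i,j)$ to \emph{be} that unique path, so the premise ``the edge $(i,j)$ is the only path from $j$ to $i$'' of Lemma~\ref{lemma:sinks} is met verbatim.

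With that observation in place, I would simply invoke Lemma~\ref{lemma:sinks}: since $i$ is measured and $(i,j)$ is the only path from $j$ to $i$, the function $f_{i,j}$ is identifiable in the class $\Fone$. Because $j$ was an arbitrary in-neighbor satisfying the single-path condition, the conclusion holds for all such in-neighbors simultaneously, which is exactly the statement of the corollary.

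The only subtlety worth flagging is the equivalence between the corollary's phrasing (``in-neighbors with only one path to $i$'') and the lemma's hypothesis (``the edge $(i,j)$ is the only path from $j$ to $i$''). I would make explicit that for an in-neighbor $j$ the edge $(i,j)\in E$ is always present, so ``$j$ has only one path to $i$'' is equivalent to ``that single path is the edge $(i,j)$,'' matching the lemma precisely. There is no real obstacle here: the corollary is a packaging of Lemma~\ref{lemma:sinks} applied edge-by-edge, and the whole proof is one or two sentences. The main thing to get right is simply confirming that the single-path condition on an in-neighbor coincides with the lemma's edge-uniqueness hypothesis, after which the identifiability of each relevant $f_{i,j}$ follows immediately.
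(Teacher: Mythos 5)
Your proposal is correct and matches the paper's approach: the paper gives no separate proof of Corollary~\ref{corr:sinks}, presenting it as an immediate edge-by-edge consequence of Lemma~\ref{lemma:sinks}, which is precisely your argument. Your one explicit observation --- that for an in-neighbor $j$ the unique path from $j$ to $i$ must be the edge $(i,j)$ itself --- is the right (and only) point needing mention, so nothing is missing.
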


Due to the presence of past inputs, Corollary~\ref{corr:sinks} is different from Lemma 2 in our preliminary work \cite{vizuete2023nonlinear} corresponding to the static case, where the measurement of a node provides the identification of all the incoming edges of the node. For instance, let us consider again the network in Fig.~\ref{fig:DAG_volterra} and the nonlinear functions of Example~\ref{ex:finite_delays}. Notice that 
based on \eqref{eq:finite_delays}, the function $f_{3,2}$ can be identified by setting to zero $u_1^{k-2}$ and $u_1^{k-3}$, but the function $f_{3,1}$ cannot be identified by using the same approach since $u_1^{k-3}$ is also present in $f_{3,2}$.
This shows that the identifiability problem in the dynamic case is clearly more complex than in the static case.

Next, we provide the following technical lemma that will be used in the proofs of the main results, where we remind that $\tau$ is the shift operator of coordinates defined in Section~\ref{sec:notation}.

\begin{lemma}[Multivariate function]\label{lemma:multivariate}
Given three non identically zero continuously differentiable functions $f:\R^p\to \R$ and $g,\tilde g:\R^q\to\R$ that satisfy:
$$
g(\zero)=\tilde g(\zero)=0;
$$
\vspace{-7mm}
\begin{multline}\label{eq:equality_multivariate}
f(x_1\!+\!g(\y_1,\ldots,\y_r),\ldots,x_p\!+\!g(\tau^{p-1}\y_1,\ldots,\tau^{p-1}\y_r))=\\     
f(x_1\!+\!\tilde g(\y_1,\ldots,\y_r),\ldots,x_p\!+\!\tilde g(\tau^{p-1}\y_1,\ldots,\tau^{p-1}\y_r)),
\end{multline}
for all $x_i$ with $i\in \{1,\ldots,p\}$, and for all $\y_j$ with $j\in \{1,\ldots,r\}$,
then either $g=\tilde g$ or $f$ is constant.
\end{lemma}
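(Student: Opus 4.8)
The plan is to eliminate the free variables $x_1,\ldots,x_p$ by a translation and reinterpret \eqref{eq:equality_multivariate} as an invariance property of $f$. Fix the vectors $\y_1,\ldots,\y_r$ and set $a_j:=g(\tau^{j-1}\y_1,\ldots,\tau^{j-1}\y_r)$ and $\tilde a_j:=\tilde g(\tau^{j-1}\y_1,\ldots,\tau^{j-1}\y_r)$ for $j=1,\ldots,p$. Substituting $z_j=x_j+a_j$ (a bijection in $x_j$ for each fixed choice of the $\y$'s), \eqref{eq:equality_multivariate} becomes
\[
f(z_1,\ldots,z_p)=f(z_1+\delta_1,\ldots,z_p+\delta_p)\quad\text{for all }z\in\R^p,
\]
where $\delta_j:=\tilde a_j-a_j$. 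Thus $f$ is invariant under translation by $\delta(\y)=(\delta_1,\ldots,\delta_p)$, and since the $\y$'s were arbitrary, $f$ is invariant under every translation in the family $\{\delta(\y)\}$. I will assume $g\neq\tilde g$ and show this forces $f$ to be constant, which is exactly the claimed dichotomy.

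Write $h:=\tilde g-g$; it is analytic, nonzero (because $g\neq\tilde g$), and satisfies $h(\zero)=0$ by hypothesis, so its lowest-order nonvanishing homogeneous part $h_d$ has degree $d\geq 1$. To extract infinitesimal translations I will scale the arguments: replacing each $\y_i$ by $\epsilon\,\mathbf{w}_i$ and using $\tau^{j-1}(\epsilon\,\mathbf{w}_i)=\epsilon\,\tau^{j-1}\mathbf{w}_i$ gives $\delta_j=h(\epsilon\,\tau^{j-1}\mathbf{w})=\epsilon^{d}h_d(\tau^{j-1}\mathbf{w})+O(\epsilon^{d+1})$. Since $f(z+\delta)-f(z)$ vanishes identically for every $\epsilon$, dividing by $\epsilon^{d}$ and letting $\epsilon\to 0$ (using that $f$ is analytic, hence $C^1$) yields
\[
\nabla f(z)\cdot v(\mathbf{w})=0\quad\text{for all }z\in\R^p\text{ and all }\mathbf{w},
\]
where $v(\mathbf{w}):=\bigl(h_d(\tau^{0}\mathbf{w}),\ldots,h_d(\tau^{p-1}\mathbf{w})\bigr)$. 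If the vectors $\{v(\mathbf{w})\}$ span $\R^p$, then $\nabla f\equiv 0$, so $f$ is constant and the proof concludes.

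The crux is therefore to show that $\{v(\mathbf{w})\}$ spans $\R^p$, equivalently that no nonzero covector $c=(c_1,\ldots,c_p)$ satisfies $\sum_{j=1}^{p}c_j\,h_d(\tau^{j-1}\mathbf{w})\equiv 0$. This is where the shift structure is essential: let $\sigma$ be the smallest slot index on which $h_d$ genuinely depends; the corresponding coordinate is read by the window of the $j=1$ term, but by no other term, because every later shift reads that coordinate in a strictly earlier slot where $h_d$ is constant. Differentiating the identity with respect to that coordinate isolates $c_1$ and forces $c_1=0$; peeling off one shift at a time — at each stage the next absolute coordinate is read genuinely only by the first of the \emph{remaining} terms, every later shift again reaching it in a slot below $\sigma$ — gives $c_2=0,\ldots,c_p=0$ inductively. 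I expect this coordinate-isolation argument, i.e. making precise that each successive $c_j$ is singled out by a coordinate read by a unique shift, to be the main obstacle, since it is the only place where the overlapping windows produced by $\tau$ must be handled with care; the reduction to translation invariance and the scaling limit are routine once analyticity and $h(\zero)=0$ are in hand.
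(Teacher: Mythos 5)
Your proof is correct, and its overall architecture is genuinely different from the paper's. Both arguments open with the same change of variables, but the paper immediately specializes to $\hat\x=\zero$, turning \eqref{eq:equality_multivariate} into the composed identity $f\bigl(\psi(\y_1,\ldots,\y_r),\ldots,\psi(\tau^{p-1}\y_1,\ldots,\tau^{p-1}\y_r)\bigr)=f(\zero)$ with $\psi=\tilde g-g$; it then differentiates this identity with respect to one well-chosen coordinate and invokes the fact that a product of analytic functions vanishing identically must have an identically vanishing factor, finishing by a case analysis. You instead retain the full translation invariance $f(z)=f(z+\delta(\y))$ for all $z\in\R^p$, linearize it by scaling $\y=\epsilon\mathbf{w}$ and extracting the lowest-degree homogeneous part $h_d$ of $h=\tilde g-g$, and thereby reduce the lemma to the linear independence of the shifted copies $h_d(\tau^{j-1}\cdot)$, $j=1,\ldots,p$. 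Notably, the combinatorial heart is the same in both proofs: the smallest slot on which a function genuinely depends ($\psi$ in the paper, $h_d$ for you) is genuinely read by exactly one window, since every later shift sees that absolute coordinate in a strictly earlier slot; your coordinate-isolation induction, which you flagged as the main obstacle, is precisely this argument and is sound as sketched. The trade-off is instructive: your route costs the extra machinery of homogeneous expansions and a scaling limit, but it yields $\nabla f(z)\cdot v(\mathbf{w})=0$ for \emph{every} $z\in\R^p$, so $\nabla f\equiv 0$ and constancy of $f$ follow with no further discussion; the paper's shorter argument obtains the vanishing of $\partial f/\partial z_q$ only along the image of $\y\mapsto\bigl(\psi(\y_1,\ldots,\y_r),\ldots,\psi(\tau^{p-1}\y_1,\ldots,\tau^{p-1}\y_r)\bigr)$, and its concluding step (``therefore $f$ is constant'') silently passes from vanishing on that image set to vanishing everywhere --- a delicate point that your argument bypasses entirely.
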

\begin{proof}
    The proof is deferred to Appendix~\ref{app:1}
\end{proof}

Since the functions in the class $\Fone$ satisfy $f(0)=0$, and the function $f$ in Lemma~\ref{lemma:multivariate} cannot be identically zero, we have the following corollary.

\begin{corollary}\label{corr:multivariate}
    Under the same conditions as in Lemma~\ref{lemma:multivariate}, if $f(\zero)=0$, then
    $$
    g=\tilde g.
    $$
\end{corollary}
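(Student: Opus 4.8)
The plan is to derive the corollary directly from Lemma~\ref{lemma:multivariate}, using the extra hypothesis $f(\zero)=0$ solely to discard the degenerate alternative in that lemma's dichotomy. Since the corollary is stated ``under the same conditions'' as the lemma, all of its hypotheses are in force: $f$, $g$, $\tilde g$ are nonzero analytic functions, $g(\zero)=\tilde g(\zero)=0$, and the functional identity \eqref{eq:equality_multivariate} holds. Invoking Lemma~\ref{lemma:multivariate} therefore gives us, with no further work, that either $g=\tilde g$ or $f$ is constant.

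The one remaining step is to eliminate the possibility that $f$ is constant, and I would argue this by contradiction. If $f$ were constant, then its single value would necessarily equal $f(\zero)$, so the new hypothesis $f(\zero)=0$ would force $f\equiv 0$. This contradicts the standing assumption inherited from Lemma~\ref{lemma:multivariate} that $f$ is a \emph{nonzero} function. Hence $f$ cannot be constant, and the lemma's alternative collapses to the desired conclusion $g=\tilde g$.

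I do not anticipate any genuine obstacle, since the entire analytic content is already packaged inside Lemma~\ref{lemma:multivariate} and the corollary is essentially a one-line specialization. The only point worth flagging is that the two additional pieces of information---namely $f(\zero)=0$ and $f\not\equiv 0$---must be used \emph{together}: neither alone rules out a constant function, but jointly they are incompatible with constancy, and it is precisely this incompatibility that converts the lemma's ``either/or'' statement into the clean equality $g=\tilde g$.
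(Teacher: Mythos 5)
Your proof is correct and matches the paper's (implicit) argument exactly: the paper omits a written proof precisely because the corollary is the one-line specialization you give, namely invoking the dichotomy of Lemma~\ref{lemma:multivariate} and ruling out the constant case since a constant $f$ with $f(\zero)=0$ would be identically zero, contradicting the hypothesis that $f$ is nonzero. Your observation that $f(\zero)=0$ and $f\not\equiv 0$ must be used jointly is a fair and accurate reading of the intended reasoning.
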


\begin{proposition}[Path graphs]\label{prop:paths_identifiability}
A path graph is identifiable in the class $\Fone$, if and only if the sink is measured.
\end{proposition}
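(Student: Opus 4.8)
The statement is an equivalence, so I would prove necessity and sufficiency separately. Necessity is immediate: a path graph on nodes $1,\dots,n$ has the single sink $n$, whose only incoming edge is $f_{n,n-1}$, so if $n$ is not measured then by Proposition~\ref{prop:sinks_sources} this edge is unidentifiable and the network is not identifiable. Thus measuring the sink is necessary, and it remains to show that measuring the sink \emph{alone} is sufficient to recover every edge $f_{n,n-1},f_{n-1,n-2},\dots,f_{2,1}$ in the class $\Fone$.

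For sufficiency I would argue by backward induction along the path, identifying the edges in the order $f_{n,n-1},\,f_{n-1,n-2},\,\dots,\,f_{2,1}$, i.e.\ from the sink towards the source. The base case $f_{n,n-1}$ follows directly from Lemma~\ref{lemma:sinks}, since in a path graph the edge $(n,n-1)$ is the only path from $n-1$ to $n$; concretely, setting every input to zero except $u_{n-1}$ forces $y_{n-1}^{k}=u_{n-1}^{k-1}$ (the property $f(\zero)=0$ propagates zeros up the chain), and hence $F_n=f_{n,n-1}(u_{n-1}^{k-2},\dots,u_{n-1}^{k-m_{n-1}-1})$, which reads off $f_{n,n-1}$.

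For the inductive step, suppose $f_{n,n-1},\dots,f_{j+1,j}$ have been identified and consider $f_{j,j-1}$. I would switch on exactly the inputs $u_{j-1},u_{j},\dots,u_{n-1}$ and set $u_1=\dots=u_{j-2}=0$, which forces $y_{j-1}^{k}=u_{j-1}^{k-1}$. Because each node $\ell$ of the path has the single in-neighbour $\ell-1$, the measured $F_n$ becomes a nested composition whose outermost function is $f_{n,n-1}$ evaluated at the arguments $y_{n-1}^{k-s}=u_{n-1}^{k-s-1}+f_{n-1,n-2}(\dots)$: the free additive terms $u_{n-1}^{k-s-1}$ play the role of the variables $x_s$, and the inner block is $f_{n-1,n-2}$ composed with shifts of $y_{n-2}$, which by causality does not depend on $u_{n-1}$. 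This matches the format of Lemma~\ref{lemma:multivariate}, so since $f_{n,n-1}$ is nonzero with $f_{n,n-1}(\zero)=0$, Corollary~\ref{corr:multivariate} lets me strip off $f_{n,n-1}$ and conclude that the inner blocks of the two candidate realisations coincide. The resulting equality has the same structural form one level down, now with outer function $f_{n-1,n-2}$ (already identified, hence common to both realisations), so I would iterate, peeling off $f_{n-1,n-2},\dots,f_{j+1,j}$ one layer at a time using the free inputs $u_{n-2},\dots,u_{j}$. After the last peel the equality reads $f_{j,j-1}(y_{j-1}^{k-2},\dots,y_{j-1}^{k-m_j-1})=\tilde f_{j,j-1}(y_{j-1}^{k-2},\dots,y_{j-1}^{k-m_j-1})$ with $y_{j-1}^{k}=u_{j-1}^{k-1}$, and varying $u_{j-1}$ over $\R$ yields $f_{j,j-1}=\tilde f_{j,j-1}$.

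The main obstacle is the sufficiency direction: with only the sink measured, $F_n$ is a deep composition of all the edge functions and there is no direct access to the intermediate signals. The two design choices that make the peeling work are (i) exciting the whole tail $u_{j-1},\dots,u_{n-1}$ so that every level carries a free additive input against which Corollary~\ref{corr:multivariate} can be invoked, and (ii) ordering the identification from the sink downward so that the outer function at each peeling step is already known to be common to both realisations. The remaining care is routine but essential: I would check that the shifted inner arguments reproduce the shift pattern $\tau^{p-1}$ required by Lemma~\ref{lemma:multivariate}, that causality keeps each free input independent of the inner block it is added to, and that every function involved is nonzero and vanishes at $\zero$, so that the Corollary applies and the peeled equalities propagate cleanly one level at a time.
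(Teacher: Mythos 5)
Your proof is correct and follows essentially the same route as the paper: necessity via Proposition~\ref{prop:sinks_sources}, the sink's incoming edge via Lemma~\ref{lemma:sinks}, and then peeling off already-identified outer functions with Lemma~\ref{lemma:multivariate}/Corollary~\ref{corr:multivariate} to propagate identification from the sink toward the source. The only difference is organizational: the paper makes a single downward pass, treating each peeled equality $F_{i-1}^{(1)}=\tilde F_{i-1}^{(1)}$ as a virtual measurement of node $i-1$ and re-applying Lemma~\ref{lemma:sinks} at that level, whereas you redo the full peeling from the sink (with upstream inputs zeroed) for each target edge --- logically equivalent, just more repetitive.
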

\begin{proof}
    By Proposition~\ref{prop:sinks_sources} the measurement of the sink $i$ is necessary. The measurement gives us the output:
    \begin{align*}
        y_i^k&\!=\!u_i^{k-1}+F_i\\
        &\!=\!u_i^{k-1}\!+\!f_{i,i-1}(u_{i-1}^{k-2}\!+\!F_{i-1}^{(1)},\ldots,u_{i-1}^{k-m_{i-1}-1}\!+\!F_{i-1}^{(m_{i-1})}).
    \end{align*}
    Since $F_i\in F(\NN^m)$, let us assume that there exists a set $\{\tilde f\}\neq \{ f\}$ such that $F_i=\tilde F_i$, which corresponds to the equality:
    \begin{multline*}
        f_{i,i-1}(u_{i-1}^{k-2}+F_{i-1}^{(1)},\ldots,u_{i-1}^{k-m_{i-1}-1}+F_{i-1}^{(m_{i-1})})=\\
        \tilde f_{i,i-1}(u_{i-1}^{k-2}+\tilde F_{i-1}^{(1)},\ldots,u_{i-1}^{k-m_{i-1}-1}+\tilde F_{i-1}^{(m_{i-1})}).
    \end{multline*}
    Notice that $F_{i-1}$ and $\tilde F_{i-1}$ could at this stage be different since the node $i-1$ has not been measured.
    Between the nodes $i$ and $i-1$ there is only one path corresponding to the edge $(i,i-1)$, which implies that the function $f_{i,i-1}$ can be identified according to Lemma~\ref{lemma:sinks}, so that $f_{i,i-1}=\tilde f_{i,i-1}$. Then, we get:
    \begin{multline*}
        f_{i,i-1}(u_{i-1}^{k-2}+F_{i-1}^{(1)},\ldots,u_{i-1}^{k-m_{i-1}-1}+F_{i-1}^{(m_{i-1})})=\\
         f_{i,i-1}(u_{i-1}^{k-2}+\tilde F_{i-1}^{(1)},\ldots,u_{i-1}^{k-m_{i-1}-1}+\tilde F_{i-1}^{(m_{i-1})}).
    \end{multline*}
    By virtue of Lemma~\ref{lemma:multivariate} we can guarantee $F_{i-1}^{(1)}=\tilde F_{i-1}^{(1)}$. This is equivalent to the mathematical constraint obtained with the measurement of the node $i-1$
    (i.e., $F_{i-1}\in F(\NN^m$)) and we can continue with the same procedure until we reach the root. In this way, we guarantee $\{ f \}=\{ \tilde f \}$ and all the path can be identified.
\end{proof}

\begin{proposition}[Trees]\label{prop:trees_identifiability}
A tree is identifiable in the class $\Fone$, if and only if all the sinks are measured.
\end{proposition}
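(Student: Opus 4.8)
The plan is to get necessity immediately from Proposition~\ref{prop:sinks_sources} and to obtain sufficiency by an induction that peels the tree from its sinks toward its sources, reusing on each branch the mechanism of the path-graph proof (Proposition~\ref{prop:paths_identifiability}). For necessity, note that in a weakly connected tree with $n>1$ nodes every sink carries at least one incoming edge, and Proposition~\ref{prop:sinks_sources} states that such edges are unidentifiable whenever the sink is not measured; hence measuring all sinks is necessary. For sufficiency I would take two sets $\{f\},\{\tilde f\}$ generating the same measured functions, process the nodes in a reverse topological order, and maintain the invariant that for every already-processed node $i$ both the incoming edges are pinned down ($f_{i,j}=\tilde f_{i,j}$ for all $j\in\NN_i$) and the output map is pinned down ($F_i=\tilde F_i$).

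The base case is a measured sink $i$, where $F_i=\tilde F_i$ holds by Assumption~\ref{ass:knowledge_Fi}. Since a tree contains a unique undirected path between any two nodes, the direct edge $(i,j)$ is the only path from each in-neighbor $j$ to $i$, so Lemma~\ref{lemma:sinks} and Corollary~\ref{corr:sinks} identify every incoming edge of $i$. For the inductive step, suppose $F_i=\tilde F_i$ is already known for a node $i$. To recover $F_j=\tilde F_j$ for an in-neighbor $j$, I would set to zero all inputs except $u_j$ and those of the ancestors of $j$. The key structural fact is that in a tree the ancestor sets (including the node itself) of distinct in-neighbors $j,j'$ of $i$ are disjoint, since otherwise the two directed paths into $i$ would close an undirected cycle; therefore all sibling terms $f_{i,j'}(\cdot)$ with $j'\neq j$ vanish, because with those inputs off $y_{j'}$ and its shifts are identically zero (here I use $f(\zero)=0$ in $\Fone$, which forces $y\equiv 0$ when all inputs are $0$). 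Using $f_{i,j}=\tilde f_{i,j}$ from Lemma~\ref{lemma:sinks}, the constraint $F_i=\tilde F_i$ collapses to
$$
f_{i,j}(u_j^{k-2}+F_j^{(1)},\ldots)=f_{i,j}(u_j^{k-2}+\tilde F_j^{(1)},\ldots).
$$
Because the successive arguments $F_j^{(1)},F_j^{(2)},\ldots$ are exactly the shifted copies required by Lemma~\ref{lemma:multivariate}, and $f_{i,j}$ is nonzero (Assumption~\ref{ass:full_excitation}) with $f_{i,j}(\zero)=0$, Corollary~\ref{corr:multivariate} yields $F_j=\tilde F_j$, which is precisely the constraint that measuring $j$ would provide.

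It then remains to check coverage and termination. Every edge $(i,j)$ has a head $i$ admitting a directed path to some sink $s$, so $i$ is an ancestor of the measured node $s$ and is reached by the backward propagation; at that point $f_{i,j}=\tilde f_{i,j}$ is forced, and the recovered equality $F_j=\tilde F_j$ lets $j$ be treated as effectively measured for the next step. The recursion terminates at the sources, which have no incoming edges and satisfy $F=0=\tilde F$ trivially. Hence $\{f\}=\{\tilde f\}$, and by Proposition~\ref{prop:sinks_sources} it is never useful to measure more than the sinks, giving the claim.

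The step I expect to be the main obstacle is the branch-isolation argument. Unlike the path graph, a node here may have several in-neighbors and several out-neighbors, so one must zero exactly the right inputs to decouple the sibling edges at $i$ and then verify that the surviving single-composition equation is still in the exact shifted form demanded by Lemma~\ref{lemma:multivariate}. The tree property---unique undirected paths, equivalently disjointness of the ancestor sets of distinct siblings---is what validates this decoupling and is the place where the argument genuinely uses that the graph is a tree rather than an arbitrary DAG (where, by Proposition~\ref{prop:DAG_unidentifiable}, such decoupling fails).
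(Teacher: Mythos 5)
Your proof is correct and follows essentially the same route as the paper: necessity from Proposition~\ref{prop:sinks_sources}, and sufficiency by exploiting the tree's unique-path property to zero all inputs off a branch (using $f(\zero)=0$ to make the sibling terms vanish) and then running the path-graph machinery---Lemma~\ref{lemma:sinks} to pin down each incoming edge and Corollary~\ref{corr:multivariate} to propagate $F_j=\tilde F_j$ backward as a virtual measurement. The only difference is presentational: the paper invokes Proposition~\ref{prop:paths_identifiability} as a black box on each path ending at a sink, whereas you inline that proposition as a node-by-node backward induction and make explicit the sibling-decoupling argument that the paper leaves implicit.
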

\begin{proof}
    By Proposition~\ref{prop:sinks_sources} the measurement of all the sinks is necessary.
    Let us consider one of the sinks $i$ of an arbitrary tree. The output is given by:
    \begin{align}
        y_i^k&=u_i^{k-1}+F_i\nonumber\\
        &=u_i^{k-1}\!+\!\sum_{j\in\NN_i}\!f_{i,j}(u_{j}^{k-2}\!+\!F_{j}^{(1)},\ldots,u_{j}^{k-m_{j}-1}\!+\!F_{j}^{(m_{j})}).\label{eq:branches_tree}
    \end{align}
    Given that $F_i\in F(\NN^m)$, let us assume that there exists a set $\{\tilde f\}\neq \{ f\}$ such that $F_i=\tilde F_i$, which implies:
    \begin{multline}\label{eq:equal_f_tilde}
        \sum_{j\in\NN_i}f_{i,j}(u_{j}^{k-2}+F_{j}^{(1)},\ldots,u_{j}^{k-m_{j}-1}+F_{j}^{(m_{j})})=\\
        \sum_{j\in\NN_i}\tilde f_{i,j}(u_{j}^{k-2}+\tilde F_{j}^{(1)},\ldots,u_{j}^{k-m_{j}-1}+\tilde F_{j}^{(m_{j})}).
    \end{multline}
    Since in a tree, between two nodes, there is only one path, we can choose an arbitrary path that finishes in the sink $i$ and set to zero the excitation signals of all the nodes that do not belong to that path. In this way, the equality \eqref{eq:equal_f_tilde} corresponds to the identifiability of a path graph and by Proposition~\ref{prop:paths_identifiability} all the edges can be identifiable. We can use the same procedure for other paths that finish in the sink $i$ to identify all the branches that end in $i$, and by measuring the other sinks, we can identify all the edges in the tree.
\end{proof}

We recall that the case of arborescences are also included in Proposition~\ref{prop:trees_identifiability}. The identifiability conditions of Propositions~\ref{prop:paths_identifiability} and \ref{prop:trees_identifiability} are similar to the linear case where the measurement of all the sinks is also necessary and sufficient for identifiability of trees (see Theorem~IV.3 in \cite{hendrickx2019identifiability}). 
Notice that the class $\Fone$ also includes linear functions, which implies that the conditions must be at least as strong as in the linear case, but not necessarily the same.
Furthermore, Propositions~\ref{prop:paths_identifiability} and \ref{prop:trees_identifiability} show that the nonlinearity of the dynamics does not require the measurement of additional nodes. In the next subsection, we will see that the identifiability conditions can, in fact, be relaxed for more general DAGs thanks to the nonlinearities.

\subsection{General DAGs}\label{sec:DAGs}

Unlike the particular case of trees, in more general DAGs, the functions $F_j^{(1)}$ in \eqref{eq:branches_tree} could depend on common variables due to several possible paths between two nodes with the same length, which makes impossible to isolate parts of the network through appropriate zero inputs. In addition, some functions in the class $\Fone$ might not be identifiable in a DAG.

\begin{example}
Let us consider the network in Fig.~\ref{fig:bridge_graph} with nonlinear functions of the form $f_{4,3}(y_3^{k-1})$, $f_{4,2}(y_2^{k-1})$, $f_{2,1}(y_1^{k-1})$ and $f_{3,1}(y_1^{k-1})$. The measurement of the node 4 provides an output of the form

\vspace{-3mm}

\small
\begin{align*}
    y_4^k&\!=\!u_4^{k-1}+f_{4,2}(y_2^{k-1})+f_{4,3}(y_3^{k-1})\\
    &\!=\!u_4^{k-1}\!+\!f_{4,2}(u_2^{k-2}\!+\!f_{2,1}(u_1^{k-3}))\!+\!f_{4,3}(u_3^{k-2}\!+\!f_{3,1}(u_1^{k-3})).
\end{align*}

\normalsize
\noindent The excitation signal $u_1^{k-3}$ appears in the functions $f_{4,2}$ and $f_{4,3}$, so that the different paths from 1 to 4 cannot be isolated by setting to zero the corresponding excitation signals. Furthermore, let us assume that the functions $f_{4,2}=a_1u_2^{k-1}$ and $f_{4,3}=a_2u_3^{k-1}$ are linear. Then, we have:

\vspace{-3mm}

\small
\begin{equation}\label{eq:linear_functions}
    y_4^k\!=\!u_4^{k-1}\!+\!a_1u_2^{k-2}\!+\!a_2u_3^{k-2}\!+\!a_1f_{2,1}(u_1^{k-3})\!+\!a_2f_{3,1}(u_1^{k-3}).
\end{equation}

\normalsize

\noindent Notice that the functions $\hat f_{2,1}=\frac{a_2}{a_1}f_{3,1}$ and $\hat f_{3,1}=\frac{a_1}{a_2}f_{2,1}$ also satisfy \eqref{eq:linear_functions}. This is due to the superposition principle that does not allow to distinguish the information of node 1 that arrives through the node 2 or 3.    
\end{example}

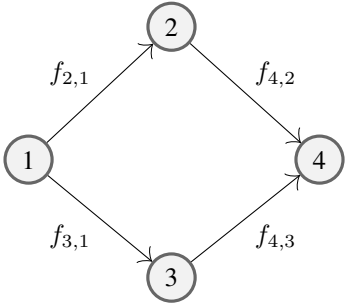
\begin{figure}
    \centering
    \begin{tikzpicture}
    [
roundnodes/.style={circle, draw=black!60, fill=black!5, very thick, minimum size=1mm},roundnode/.style={circle, draw=white!60, fill=white!5, very thick, minimum size=1mm}
]
\node[roundnodes](node1){1};
\node[roundnodes](node2)[above=of node1,yshift=1mm,xshift=1.9cm]{2};
\node[roundnodes](node4)[right=3.2cm of node1]{4};
\node[roundnodes](node3)[below=of node1,yshift=1mm,xshift=1.9cm]{3};

\draw[-{Classical TikZ Rightarrow[length=1.5mm]}] (node1) to node [left,swap,yshift=2.5mm] {$f_{2,1}$} (node2);
\draw[-{Classical TikZ Rightarrow[length=1.5mm]}] (node2) to node [right,swap,yshift=2.5mm] {$f_{4,2}$} (node4);
\draw[-{Classical TikZ Rightarrow[length=1.5mm]}] (node1) to node [left,swap,yshift=-2.5mm] {$f_{3,1}$} (node3);
\draw[-{Classical TikZ Rightarrow[length=1.5mm]}] (node3) to node [right,swap,yshift=-2.5mm] {$f_{4,3}$} (node4);
\end{tikzpicture}
    \caption{Network with functions in $\Fone$ that cannot be identified by only measuring the sink when $f_{2,1}$ and $f_{3,1}$ are linear due to the superposition principle.}
    \label{fig:bridge_graph}
\end{figure}

For this reason, we restrict the identifiability problem to a smaller class of functions \cite{vizuete2023nonlinear}.

\begin{definition}[Class of functions $\Ftwo$]\label{def:class_Ftwo}
Let $\Ftwo$ be the class of functions $f:\R^m\to\R$ with the following properties:
\begin{enumerate}
    \item $f$ is twice continuously differentiable in $\R^m$.
    \item $f(\zero)=0$.
    \item At least one of the partial derivatives of $f$ is not constant.    
\end{enumerate}
\end{definition}

Unlike the class $\Fone$, the last condition in Definition~\ref{def:class_Ftwo} is equivalent to none of the functions $f$ in $\Ftwo$ being purely linear, which still encompasses numerous nonlinear functions. Furthermore, we restrict the class to \emph{twice} continuously differentiable functions since the derivative will allow us to distinguish the information coming through different paths.
Since in a DAG, an input can influence a node via multiple paths, we cannot apply the argument of the proof of Proposition~\ref{prop:trees_identifiability} based on setting to zero specific inputs. 
For this reason, we need to exploit further properties of DAGs related to the number of paths between two nodes, to derive identifiability conditions.

\begin{lemma}\label{lemma:unique_path}
    In a DAG, any node $i$ which is not a source has at least one in-neighbor $j$ with only one path to $i$.   
\end{lemma}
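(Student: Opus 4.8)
The plan is to exploit a topological ordering of the DAG together with the elementary observation that every directed path terminating at $i$ must arrive through one of its in-neighbors in $\NN_i$. Since $i$ is not a source, $\NN_i\neq\emptyset$, so there is at least one candidate edge $(i,j)$ to examine, and the whole task is to single out the ``right'' in-neighbor.

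First I would fix a topological order of $G$, so that every directed edge (in the flow direction $j\to i$) runs from an earlier node to a later node, and hence every directed path runs strictly forward in this order. Restricting to the nonempty finite set $\NN_i$, I would pick the in-neighbor $j^\star\in\NN_i$ occupying the latest position in the order; equivalently, $j^\star$ is a maximal element of $\NN_i$ for the reachability relation, which exists because acyclicity makes reachability a (finite) partial order. By this maximality, there is no directed path from $j^\star$ to any other in-neighbor $j'\in\NN_i$, since such a path would force $j'$ to appear after $j^\star$, a contradiction.

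The second step is to show that the edge $(i,j^\star)$ is the \emph{only} path from $j^\star$ to $i$. Any directed path from $j^\star$ to $i$ has the form $j^\star\to\cdots\to j'\to i$, whose last edge $(i,j')$ emanates from some in-neighbor $j'\in\NN_i$, while the prefix $j^\star\to\cdots\to j'$ is a path from $j^\star$ to $j'$. If $j'\neq j^\star$ this prefix is a nontrivial path from $j^\star$ to another in-neighbor, contradicting the previous step; hence $j'=j^\star$. Then the prefix is a path from $j^\star$ to itself, which in a DAG can only be the trivial (empty) one. Thus the whole path reduces to the single edge $(i,j^\star)$, so $j^\star$ is an in-neighbor of $i$ with exactly one path to $i$, as claimed.

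The conceptual content is mild, so the main point to handle carefully is the decomposition of an arbitrary path to $i$ into (prefix to an in-neighbor)\,$+$\,(final edge), together with the two appeals to acyclicity: once to guarantee the maximal in-neighbor $j^\star$ exists, and once to rule out a nontrivial return path $j^\star\to\cdots\to j^\star$. I expect no serious obstacle beyond stating these two uses cleanly; a minimal-counterexample or induction-on-$|\NN_i^p|$ argument would also work, but the topological-order formulation is the most transparent.
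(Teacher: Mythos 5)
Your proof is correct, but it takes a genuinely different route from the paper's. The paper argues by contradiction: it supposes every in-neighbor of $i$ has more than one path to $i$, forms the subgraph induced by $\NN_i$ together with all nodes lying on paths between two in-neighbors, and observes that this subgraph would have no sink --- contradicting the fact that every (finite) induced subgraph of a DAG is a DAG and hence has a sink. Your argument is direct and constructive: you exhibit the desired in-neighbor $j^\star$ explicitly as the one latest in a topological order, show no path can leave $j^\star$ and return to another in-neighbor, and then decompose an arbitrary path from $j^\star$ to $i$ into a prefix plus its final edge to conclude that the edge $(i,j^\star)$ is the unique such path. The two proofs are dual in spirit --- the sink of the paper's induced subgraph is essentially your maximal $j^\star$ --- but yours avoids reasoning about an informally described induced subgraph and makes the witness explicit, which is arguably cleaner; the paper's version, in turn, needs no topological-order machinery beyond the existence of a sink. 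One small caveat in your write-up: ``latest in-neighbor in a fixed topological order'' and ``maximal element of $\NN_i$ under reachability'' are not literally equivalent notions (the former is one particular maximal element, not all of them), but this is harmless since either choice delivers exactly the property you use, namely that no nontrivial path runs from $j^\star$ to another member of $\NN_i$.
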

\begin{proof}
If all the nodes except the sources have only one in-neighbor, the result trivially holds. When there is a node with more than one in-neighbor, we prove it by contradiction. Let us assume there is a DAG with a node $i$ with $\aNN\ge 2$ in-neighbors.  Let us suppose that all the in-neighbors of $i$ have more than one path to $i$ and let us consider the subgraph induced by the set of in-neighbors of $i$ and the nodes that correspond to any path between two in-neighbors of $i$. This subgraph does not have a sink, which contradicts the fact that any subgraph of a DAG is also a DAG. Therefore, there must be at least one neighbor $j$ with only one path to $i$, which would correspond to the sink of the induced subgraph. 
\end{proof}

For our convenience, we introduce the notion of  identifiability of a function $F_\ell$ that will be used in the proof of Lemma~\ref{lemma:removal_node}.

\begin{definition}[Identifiability of $F_\ell$]\label{def:math_identifiability_Fi}
    Given a set of functions $\{ f \}=\{f_{i,j}\in \F\;|\;(i,j)\in E\}$ that generates $F(\NN^m)$ and any other set of functions $\{ \tilde f \}=\{\tilde f_{i,j}\in\F\;|\;(i,j)\in E\}$ that generates $\tilde F(\NN^m)$. The function $F_\ell$ is identifiable if $
    F(\NN^m)=\tilde F(\NN^m)
    $, implies that $F_{\ell}=\tilde F_{\ell}$ for any $\tilde F_{\ell}$, which is equivalent to the mathematical constraint associated with the measurement of the node $\ell$.
\end{definition}

If the node $\ell$ is measured (i.e., $\ell\in\NN^m$), the function $F_\ell$ is trivially identifiable since the measurement of $\ell$ provides directly the knowledge of $F_\ell$.

The following lemma provides a relationship between the identifiability problem based on the measurement of a node $i$ in a graph $G$ and the identifiability problem based on the measurement of the same node $i$ in the induced subgraph $G_{V\setminus\{ j\}}$ obtained with the removal of an in-neighbor $j$.

\begin{lemma}[Removal of a node]\label{lemma:removal_node}
    Given a DAG $G$ and a node $j$ with only one path to its out-neighbor $i$. Let us assume that $j$ has been measured and the edge $f_{i,j}$ is identifiable. When $i\in \NN^m$, the edges $f_{i,\ell}$ and functions $F_\ell$ for $\ell\in\NN_i$ are identifiable in $G$ if they are identifiable in the induced subgraph   
    $G_{V\setminus\{ j\}}$.
\end{lemma}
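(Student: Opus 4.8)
The plan is to show that measuring $i$ in $G$ carries exactly the same information about the edges $f_{i,\ell}$ with $\ell\in\NN_i\setminus\{j\}$ as measuring $i$ in the reduced graph $G_{V\setminus\{j\}}$, so that identifiability transfers from the subgraph to $G$. First I would record the key graph-theoretic consequence of the hypothesis: since the edge $(i,j)$ is the only path from $j$ to $i$, no other in-neighbor $\ell\in\NN_i\setminus\{j\}$ is reachable from $j$, because a path $j\rightsquigarrow\ell$ followed by the edge $(i,\ell)$ would give a second path from $j$ to $i$ of length at least two. Hence $j$ is not an ancestor of any such $\ell$, and since $F_\ell$ in \eqref{eq:function_Fi} depends only on the nodes having a path to $\ell$, the function $F_\ell$ is literally the same object in $G$ and in $G_{V\setminus\{j\}}$. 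The same argument shows that every ancestor of $i$ other than $j$ survives unchanged in the subgraph, so the whole ancestor cone of $i$ is preserved except for the direct edge $(i,j)$.

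Next I would perform a cancellation at node $i$. Writing the measurement \eqref{eq:F_incoming_edges} at $i$ and isolating the $j$-term gives
\[
F_i-f_{i,j}\!\left(u_j^{k-2}+F_j^{(1)},\ldots,u_j^{k-m_j-1}+F_j^{(m_j)}\right)
=\sum_{\ell\in\NN_i\setminus\{j\}} f_{i,\ell}\!\left(u_\ell^{k-2}+F_\ell^{(1)},\ldots\right).
\]
Because $j$ is measured (so $F_j$ is known) and $f_{i,j}$ is identifiable, the left-hand side is entirely known, and by the previous paragraph the right-hand side is precisely the measured function of $i$ in $G_{V\setminus\{j\}}$. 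Thus the subgraph measurement $F_i$ in $G_{V\setminus\{j\}}$ can be recovered from the $G$-measurements.

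Finally I would transfer identifiability. Given any alternative set $\{\tilde f\}$ generating the same $F(\NN^m)$ in $G$, the hypotheses give $F_j=\tilde F_j$ and $f_{i,j}=\tilde f_{i,j}$, so the two $j$-terms coincide and the cancellation above yields equality of the subgraph measurements at $i$. Restricting $\{f\}$ and $\{\tilde f\}$ to the edges of $G_{V\setminus\{j\}}$ produces two function sets that agree on the measured functions relevant to the edges into $i$: at $i$ (just shown), and at every measured ancestor $a$ of $i$, where $F_a=\tilde F_a$ transfers to the subgraph because the cone of $i$ is unchanged (no such $a\neq j$ is a descendant of $j$). Identifiability in $G_{V\setminus\{j\}}$ then forces $f_{i,\ell}=\tilde f_{i,\ell}$ and $F_\ell=\tilde F_\ell$ for all $\ell\in\NN_i$, which are exactly the claimed conclusions in $G$.

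The main obstacle is the bookkeeping of the measured set after deleting $j$: one must verify that subtracting the known $j$-contribution reproduces the subgraph dynamics exactly, with the arguments inside each surviving $f_{i,\ell}$ unchanged, and that the measured functions invoked by the subgraph hypothesis—those on the ancestor cone of $i$—are genuinely the same in both graphs. Both facts hinge on the unique-path assumption, which guarantees that the sole influence of $j$ on the cone of $i$ is the single edge $(i,j)$; the measured descendants of $j$ lying outside that cone are irrelevant to identifying the edges into $i$ and can be disregarded.
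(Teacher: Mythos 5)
Your proposal is correct and follows essentially the same route as the paper's proof: cancel the $j$-term in the measurement at $i$ using the two hypotheses ($F_j$ known because $j$ is measured, and $f_{i,j}=\tilde f_{i,j}$ because that edge is identifiable), then observe that since the unique-path assumption forbids any path from $j$ to another in-neighbor $\ell$ of $i$, the functions $F_\ell$ are unchanged by deleting $j$, so the residual constraint coincides with the identifiability constraint in $G_{V\setminus\{j\}}$. If anything, you are slightly more explicit than the paper about the bookkeeping of which measured functions transfer to the subgraph, but the key ideas and their order are the same.
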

\begin{proof}
In the DAG $G$, the measurement of the node $i$ provides the output:
\begin{align}
  y_i^k&=u_i^{k-1}+F_i\nonumber\\
  &=u_i^{k-1}+ \sum_{\ell\in\NN_i}f_{i,\ell}(u_{\ell}^{k-2}+ F_{\ell}^{(1)},\ldots,u_{\ell}^{k-m_{\ell}-1}+ F_{\ell}^{(m_{\ell})}),   \label{eq:measure_one_path}
\end{align}
    where the functions $F_\ell$ for $\ell\in\NN_i\setminus \{ j \}$ do not depend on any information coming through $j$ since there is no path from $j$ to other in-neighbors of $i$.
   Let us assume that there exists a set $\{\tilde f\}\neq \{ f\}$ such that $F_i=\tilde F_i$, which implies:
    \begin{multline*}
        \sum_{\ell\in\NN_i}f_{i,\ell}(u_{\ell}^{k-2}+F_{\ell}^{(1)},\ldots,u_{\ell}^{k-m_{\ell}-1}+F_{\ell}^{(m_{\ell})})=\\
        \sum_{\ell\in\NN_i}\tilde f_{i,\ell}(u_{\ell}^{k-2}+\tilde F_{\ell}^{(1)},\ldots,u_{\ell}^{k-m_{\ell}-1}+\tilde F_{\ell}^{(m_{\ell})}).
    \end{multline*}
    Since the measurement of $j$ implies the knowledge of $F_j$ (i.e., $F_j=\tilde F_j$), and the edge $f_{i,j}$ is identifiable (i.e., $f_{i,j}=\tilde f_{i,j}$), we get:   \begin{multline}\label{eq:equal_f_tilde_one_path}
        \sum_{\ell\in\NN_i\setminus\{ j\}}f_{i,\ell}(u_{\ell}^{k-2}+F_{\ell}^{(1)},\ldots,u_{\ell}^{k-m_{\ell}-1}+F_{\ell}^{(m_{\ell})})=\\
        \sum_{\ell\in\NN_i\setminus\{ j \}}\tilde f_{i,\ell}(u_{\ell}^{k-2}+\tilde F_{\ell}^{(1)},\ldots,u_{\ell}^{k-m_{\ell}-1}+\tilde F_{\ell}^{(m_{\ell})}).
    \end{multline}
    Now, let us consider the induced subgraph $G_{V\setminus\{ j \}}$. With the removal of the node $j$, the measurement of the node $i$ provides the output:
    \begin{equation}\label{eq:node_i_subgraph}
    y_i^k=u_i^{k-1}+ \sum_{\ell\in\NN_i\setminus\{ j\}}f_{i,\ell}(u_{\ell}^{k-2}+\hat F_{\ell}^{(1)},\ldots,u_{\ell}^{k-m_{\ell}-1}+\hat F_{\ell}^{(m_{\ell})}).    
    \end{equation}
    Since $j$ does not affect the functions $F_\ell$ for $\ell\in\NN_i\setminus \{ j \}$, we have that $\hat F_\ell=F_\ell$, so that \eqref{eq:node_i_subgraph} implies \eqref{eq:equal_f_tilde_one_path}. Therefore, if the edges $f_{i,\ell}$ and the functions $F_\ell$ for $\ell\in\NN_i\setminus\{j\}$ are identifiable in $G_{V\setminus\{ j \}}$, they are also identifiable in $G$. 
\end{proof}

\begin{figure}
    \centering
    \begin{tikzpicture}
    [
roundnodes/.style={circle, draw=black!60, fill=black!5, very thick, minimum size=1mm},roundnode/.style={circle, draw=white!60, fill=white!5, very thick, minimum size=1mm}
]
\node[roundnodes](node1){1};
\node[roundnodes](node2)[below=of node1,yshift=4mm,xshift=1cm]{2};
\node[roundnodes](node3)[below=of node2,yshift=4mm,xshift=-1cm]{3};
\node[roundnodes](node4)[below=of node3,yshift=4mm,xshift=-0.5cm]{4};
\node[roundnodes](node5)[right=of node2,yshift=0cm,xshift=0cm]{5};

\node[roundnodes](node6)[right=of node1,yshift=0mm,xshift=3cm]{1};
\node[roundnodes](node7)[right=of node3,yshift=0cm,xshift=3cm]{3};
\node[roundnodes](node8)[right=of node4,yshift=0mm,xshift=3cm]{4};
\node[roundnodes](node9)[right=of node5,yshift=0cm,xshift=3cm]{5};

\draw[-{Classical TikZ Rightarrow[length=1.5mm]}] (node1) to node [left,swap,yshift=2.5mm] {} (node2);
\draw[-{Classical TikZ Rightarrow[length=1.5mm]}] (node3) to node [right,swap,yshift=2.5mm] {} (node2);
\draw[-{Classical TikZ Rightarrow[length=1.5mm]}] (node4) to node [left,swap,yshift=-2.5mm] {} (node3);
\draw[-{Classical TikZ Rightarrow[length=1.5mm]}] (node1) to node [right,swap,yshift=-2.5mm] {} (node5);
\draw[-{Classical TikZ Rightarrow[length=1.5mm]}] (node2) to node [right,swap,yshift=-2.5mm] {} (node5);
\draw[-{Classical TikZ Rightarrow[length=1.5mm]}] (node3) to node [right,swap,yshift=-2.5mm] {} (node5);
\draw[-{Classical TikZ Rightarrow[length=1.5mm]}] (node4) to node [right,swap,yshift=-2.5mm] {} (node5);

\draw[-{Classical TikZ Rightarrow[length=1.5mm]}] (node6) to node [left,swap,yshift=2.5mm] {} (node9);
\draw[-{Classical TikZ Rightarrow[length=1.5mm]}] (node7) to node [left,swap,yshift=2.5mm] {} (node9);
\draw[-{Classical TikZ Rightarrow[length=1.5mm]}] (node8) to node [left,swap,yshift=2.5mm] {} (node9);
\draw[-{Classical TikZ Rightarrow[length=1.5mm]}] (node8) to node [left,swap,yshift=2.5mm] {} (node7);

\node(graph1)[below=of node4,yshift=0.7cm,xshift=1.6cm]{$a) $ Digraph $G$};

\node(graph1)[below=of node8,yshift=0.7cm,xshift=1.6cm]{$b) $ Induced subgraph $G_{V\setminus \{ 2\}}$};

\end{tikzpicture}
    \caption{If the node 2 is measured and $f_{5,2}$ is identifiable, the measurement of the node 5 provides the identifiability of the edges $f_{5,1}$, $f_{5,3}$ and $f_{5,4}$ and functions $F_1$, $F_3$ and $F_4$ if they are identifiable in the induced subgraph $G_{V\setminus \{ 2\}}$.}
    \label{fig:induced_graph}
\end{figure}

For instance, in the DAG in Fig.~\ref{fig:induced_graph}, the node 2 has only one path to its out-neighbor 5. According to Lemma~\ref{lemma:removal_node}, if the node 2 is measured and $f_{5,2}$ is identifiable, the edges $f_{5,1}$, $f_{5,3}$ and $f_{5,4}$ and the functions $F_1$, $F_3$ and $F_4$ are identifiable with the measurement of the node 5 if they are identifiable in the subgraph $G_{V\setminus \{ 2\}}$ obtained with the removal of the \linebreak node 2.

Lemma~\ref{lemma:removal_node} also holds in the linear case, but its importance lies mainly in the nonlinear case where the information associated with the measurement of the node $j$ can be obtained without measuring this node as it will be seen in the proof of Theorem~\ref{thm:DAG_Volterra}. Notice that the statement of Lemma~\ref{lemma:removal_node} is only local at a specific node $i$, but it can be  applied independently to each measured node.
Before introducing the main result of this section, we will need the following technical lemmas where we remind that $\tau$ is the shift operator of coordinates defined in Section~\ref{sec:notation}.

\begin{lemma}[Sum of multivariate functions]\label{lemma:multivariate_sum}
Given three non identically zero twice continuously differentiable functions $f:\R^p\to \R$ and $g,\tilde g:\R^q\to\R$. Let us assume that at least one of the partial derivatives of $f$ is not constant and they satisfy:
$$
f(\zero)= g(\zero)= \tilde g(\zero)=0;
$$
\vspace{-7mm}
\begin{multline}\label{eq:statement_lemma_function_h}
f(x_1\!+\!g(\y_1,\ldots,\y_r),\ldots,x_p\!+\!g(\tau^{p-1}\y_1,\ldots,\tau^{p-1}\y_r))=\\     
f(x_1\!+\!\tilde g(\y_1,\ldots,\y_r),\ldots,x_p\!+\!\tilde g(\tau^{p-1}\y_1,\ldots,\tau^{p-1}\y_r))+h,
\end{multline}
for all $x_i$ with $i\in \{1,\ldots,p\}$, and for all $\y_j$ with $j\in \{1,\ldots,r\}$, where $h$ is an arbitrary function that does not depend on $x_1,\ldots,x_p$.
Then $g=\tilde g$.
\end{lemma}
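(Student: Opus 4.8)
The plan is to reduce this statement to Lemma~\ref{lemma:multivariate} by differentiating away the free term $h$. The essential observation is that $h$ does not depend on the variables $x_1,\ldots,x_p$, so any derivative in one of these variables eliminates it, and the assumption that at least one partial derivative of $f$ is non-constant is precisely what excludes the escape clause ``$f$ is constant'' of Lemma~\ref{lemma:multivariate}. To lighten the notation I would write \eqref{eq:statement_lemma_function_h} as
\begin{equation*}
f\prt{x_1+g_0,\ldots,x_p+g_{p-1}}=f\prt{x_1+\tilde g_0,\ldots,x_p+\tilde g_{p-1}}+h,
\end{equation*}
where $g_s:=g(\tau^s\y_1,\ldots,\tau^s\y_r)$ and $\tilde g_s:=\tilde g(\tau^s\y_1,\ldots,\tau^s\y_r)$, noting that $g_s$, $\tilde g_s$ and $h$ all depend only on $\y_1,\ldots,\y_r$.

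First I would differentiate both sides with respect to $x_i$ for a fixed index $i\in\{1,\ldots,p\}$. Because the arguments $g_s,\tilde g_s$ are functions of the $\y$'s alone, the variable $x_i$ enters each side only through the $i$-th slot, so the chain rule leaves a single partial derivative on each side while the $h$ term disappears:
\begin{multline*}
(\partial_i f)\prt{x_1+g(\y_1,\ldots,\y_r),\ldots,x_p+g(\tau^{p-1}\y_1,\ldots,\tau^{p-1}\y_r)}=\\
(\partial_i f)\prt{x_1+\tilde g(\y_1,\ldots,\y_r),\ldots,x_p+\tilde g(\tau^{p-1}\y_1,\ldots,\tau^{p-1}\y_r)}.
\end{multline*}
This is exactly the hypothesis \eqref{eq:equality_multivariate} of Lemma~\ref{lemma:multivariate} with the function $f$ replaced by $\partial_i f$, and the conditions $g(\zero)=\tilde g(\zero)=0$ are already part of our assumptions.

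Next I would invoke the hypothesis that at least one partial derivative of $f$ is not constant, and fix an index $i$ for which $\partial_i f$ is non-constant. Such a $\partial_i f$ is analytic (since $f$ is) and nonzero (a non-constant function cannot be identically zero), so all the requirements of Lemma~\ref{lemma:multivariate} are met. Applying that lemma to $\partial_i f$ yields that either $g=\tilde g$ or $\partial_i f$ is constant; since $\partial_i f$ was chosen non-constant, the second alternative is impossible and we conclude $g=\tilde g$.

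The one point requiring care is the differentiation step: one must verify that $x_i$ occurs only in the $i$-th argument of each copy of $f$, which is guaranteed by the fact that the perturbations $g_s,\tilde g_s$ involve only the $\y$ variables, so that the derivative of $h$ vanishes and a single $\partial_i f$ survives on each side. Beyond this bookkeeping there is no genuine computation here, as the substantive work is delegated to Lemma~\ref{lemma:multivariate}; the role of the new hypothesis on the partial derivatives of $f$ is solely to rule out the constant-$f$ alternative that was unavoidable in the earlier lemma.
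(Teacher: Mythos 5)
Your proposal is correct and takes essentially the same route as the paper's own proof: differentiate \eqref{eq:statement_lemma_function_h} with respect to an $x_i$ whose corresponding partial derivative of $f$ is non-constant (which eliminates $h$ and, by the chain rule, leaves the same equality with $f$ replaced by $\partial f/\partial z_i$), then apply Lemma~\ref{lemma:multivariate} and rule out the constant alternative by the choice of $i$. Your explicit check that a non-constant $\partial f/\partial z_i$ is automatically nonzero and analytic, so that Lemma~\ref{lemma:multivariate} indeed applies, is a detail the paper leaves implicit.
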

\begin{proof}
    The proof is deferred to Appendix~\ref{app:2}
\end{proof}
\begin{lemma}\label{lemma:recursively}
 In a DAG and in the class $\Ftwo$, if the function $F_i$ is identifiable, then all the functions $f_{i,j}$ and 
$F_j$ are identifiable for all $j\in \NN_i$.
\end{lemma}
\begin{proof}
    If $F_i$ corresponds to a source, the result is trivial. Let us assume that $F_i$ corresponds to a node that is not a source. 
    The output of any node $i$ is of the form:
    \begin{align}
        y_i^k&=u_i^{k-1}+F_i\nonumber\\
        &=u_i^{k-1}\!+\!\sum_{j\in\NN_i}\!f_{i,j}(u_{j}^{k-2}\!+\!F_{j}^{(1)},\ldots,u_{j}^{k-m_{j}-1}\!+\!F_{j}^{(m_{j})}).\nonumber
    \end{align}
    Let us assume that
    there exists a set $\{\tilde f\}\neq \{ f\}$ such that $F_i=\tilde F_i$, which implies:
\begin{multline}\label{eq:equal_f_tilde_DAG}
        \sum_{j\in\NN_i}f_{i,j}(u_{j}^{k-2}+F_{j}^{(1)},\ldots,u_{j}^{k-m_{j}-1}+F_{j}^{(m_{j})})=\\
        \sum_{j\in\NN_i}\tilde f_{i,j}(u_{j}^{k-2}+\tilde F_{j}^{(1)},\ldots,u_{j}^{k-m_{j}-1}+\tilde F_{j}^{(m_{j})}).
    \end{multline}
By Lemma~\ref{lemma:unique_path}, there is at least one in-neighbor $\ell$ with only one path to $i$. 
Then, we can use Lemma~\ref{lemma:sinks} to guarantee $f_{i,\ell}=\tilde f_{i,\ell}$, and \eqref{eq:equal_f_tilde_DAG} can be expressed as:
\begin{multline}\label{eq:fij_lemma5}
    f_{i,\ell}(u_{\ell}^{k-2}+F_{\ell}^{(1)},\ldots,u_{\ell}^{k-m_{\ell}-1}+F_{\ell}^{(m_{\ell})})=\\
         f_{i,\ell}(u_{\ell}^{k-2}+\tilde F_{\ell}^{(1)},\ldots,u_{\ell}^{k-m_{\ell}-1}+\tilde F_{\ell}^{(m_{\ell})})+\phi,
\end{multline}
where $\phi$ does not depend on any of the inputs $u_\ell$ since the only path from $\ell$ to $i$ is the edge $(i,\ell)$. 
By using Lemma~\ref{lemma:multivariate_sum} in \eqref{eq:fij_lemma5} we obtain $F_{\ell}^{(1)}=\tilde F_{\ell}^{(1)}$, which corresponds to the mathematical constraint associated with the measurement of the node $\ell$.
Then, the equality \eqref{eq:equal_f_tilde_DAG} becomes:
\begin{multline}\label{eq:node_i2_Volterra}
\sum_{j\in\NN_i\setminus\brc{\ell}}f_{i,j}(u_{j}^{k-2}+F_{j}^{(1)},\ldots,u_{j}^{k-m_{j}-1}+F_{j}^{(m_{j})})=\\
\sum_{j\in\NN_i\setminus\brc{\ell}}\tilde f_{i,j}(u_{j}^{k-2}+\tilde F_{j}^{(1)},\ldots,u_{j}^{k-m_{j}-1}+\tilde F_{j}^{(m_{j})}),
\end{multline}
and according to Lemma~\ref{lemma:removal_node}, the edges $f_{i,j}$ and functions $F_j$ for $j\in\NN_i\setminus\{\ell\}$ are identifiable if they are identifiable in the induced subgraph $G_{V\setminus\{ \ell\}}$.
By Lemma~\ref{lemma:unique_path}, in the subgraph $G_{V\setminus\{ \ell\}}$ there must be now another node with only one path to the node $i$ and we can follow a similar approach. Then, we can show that 
$$
f_{i,j}=\tilde f_{i,j} \quad \text{ for all } j\in\NN_i,
$$
and similarly for the functions
$$
F_j^{(1)}=\tilde F_j^{(1)} \quad \text{ for all } j\in\NN_i.
$$
\end{proof}

\begin{theorem}[DAGs]\label{thm:DAG_Volterra}
A DAG is identifiable in the class $\Ftwo$, if and only if all the sinks are measured.
\end{theorem}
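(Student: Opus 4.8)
The plan is to obtain necessity directly from Proposition~\ref{prop:sinks_sources} and sufficiency by a backward induction that assembles Lemmas~\ref{lemma:unique_path}, \ref{lemma:sinks}, \ref{lemma:multivariate_sum} and \ref{lemma:removal_node}. For necessity, suppose some sink $i$ is not measured, so that $F_i\notin F(\NN^m)$; by Proposition~\ref{prop:sinks_sources} its incoming edges are not identifiable. Concretely, replacing any single $f_{i,j}$ by $2f_{i,j}\in\Ftwo$ (which differs from $f_{i,j}$ since $f_{i,j}\not\equiv 0$, and stays in $\Ftwo$) leaves every measured function unchanged, so $G$ is not identifiable in $\Ftwo$. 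Hence measuring all sinks is necessary.

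For sufficiency, the core is the following propagation claim: \emph{if $F_i$ is identifiable, then every edge $f_{i,\ell}$ and every function $F_\ell$ with $\ell\in\NN_i$ is identifiable}. I would prove this by induction on $|\NN_i|$. By Lemma~\ref{lemma:unique_path} there is an in-neighbor $j$ with a single path to $i$, and Lemma~\ref{lemma:sinks} then identifies $f_{i,j}$. The key step is to recover $F_j$ even though $j$ is unmeasured: starting from $F_i=\tilde F_i$ and substituting $f_{i,j}=\tilde f_{i,j}$, one isolates
$$f_{i,j}(u_j^{k-2}+F_j^{(1)},\ldots)=f_{i,j}(u_j^{k-2}+\tilde F_j^{(1)},\ldots)+h,$$
where $h$ gathers the contributions of the in-neighbors $\ell\neq j$. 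Since $j$ reaches $i$ only through the edge $(i,j)$, the inputs $u_j$ never enter those terms, so $h$ does not depend on $u_j$; as $f_{i,j}\in\Ftwo$ has a non-constant partial derivative and the functions $F_j$ vanish at the origin (being compositions of maps in $\Ftwo$), Lemma~\ref{lemma:multivariate_sum} forces $F_j=\tilde F_j$, i.e.\ $F_j$ is identifiable (the base case $|\NN_i|=1$ has $h=0$ and follows from Corollary~\ref{corr:multivariate}). With $f_{i,j}$ and $F_j$ now known, Lemma~\ref{lemma:removal_node} reduces the identifiability of the remaining edges and functions at $i$ to the same question in $G_{V\setminus\{j\}}$, where $F_i$ is still identifiable (the residual constraint) and $i$ has one fewer in-neighbor; the induction hypothesis then closes the claim.

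The theorem follows by globalizing in a reverse topological order, which exists because $G$ is a DAG. Each measured sink has $F_i$ identifiable, and by induction on the distance of a node to the nearest sink, every node $i$ inherits identifiability of $F_i$ from an out-neighbor $p$ (apply the propagation claim at $p$, noting $i\in\NN_p$). Applying the propagation claim at every non-source node then identifies all of its incoming edges, hence every edge of $G$, while sources need not be measured by Proposition~\ref{prop:sinks_sources}. The main obstacle is the middle step---recovering $F_j$ for an unmeasured node---since this is exactly where the restriction to $\Ftwo$ (a non-constant partial derivative) is indispensable and where the delayed copies $F_j^{(1)},\ldots,F_j^{(m_j)}$ must be matched to the shift structure required by Lemma~\ref{lemma:multivariate_sum}; it is also the feature that lets general DAGs be identified with strictly fewer measurements than trees.
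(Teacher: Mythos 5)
Your proof is correct and follows essentially the same route as the paper's: necessity via Proposition~\ref{prop:sinks_sources}, and sufficiency by combining Lemma~\ref{lemma:unique_path} (an in-neighbor with a unique path to $i$), Lemma~\ref{lemma:sinks} (identifying that edge), Lemma~\ref{lemma:multivariate_sum} (recovering the unmeasured neighbor's $F_j$ from the residual equality with the $u_j$-independent remainder $h$), and Lemma~\ref{lemma:removal_node} (deleting the node and recursing), then propagating backward from the measured sinks. Your only departure is organizational: you package the recursion as an explicit propagation claim proved by induction on $\abs{\NN_i}$ and globalize by induction on the distance to the nearest sink, where the paper runs the same recursion informally.
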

\begin{proof}
    By Proposition~\ref{prop:sinks_sources} the measurement of all the sinks is necessary. Let us consider an arbitrary DAG and an arbitrary sink $i$. The measurement of the sink $i$ provides the identifiability of $F_i$ and according to Lemma~\ref{lemma:recursively}, all the functions $f_{i,j}$ and $F_j$ are identifiable for all $j\in\NN_i$. Then, we can apply Lemma~\ref{lemma:recursively} again to each function $F_j$ for $j\in\NN_i$, and by recursively  using Lemma~\ref{lemma:recursively}, we guarantee that $\{ f \}=\{ \tilde f \}$ for every path that ends in the sink $i$, and since in a DAG all the paths end in a sink \cite{bang2008digraphs}, by measuring the other sinks we can identify all the network. 
\end{proof}
We note that  Theorem~\ref{thm:DAG_Volterra} allows us to analyze the case of nonlinear functions $f_{i,j}$ that depend on outputs with zero delays (i.e., $y_j^{k-0}$), and this will be used in the proof of Theorem~\ref{thm:general_digraph}.
Since the class $\Ftwo$ excludes pure linear functions, the identifiability conditions of Theorem~\ref{thm:DAG_Volterra} are weaker than the linear case where to identify the outgoing edges of a node $i$, it is necessary to have $\abs{\NN_i^{out}}$ vertex-disjoint paths to measured nodes (see Corollary~V.2 in \cite{hendrickx2019identifiability}). In the nonlinear case, we only need to measure the sinks, independently of the number of out-neighbors of each node.  For instance, the DAG in Fig.~\ref{fig:bridge_graph} could represent a hydraulic network with 4 tanks (nodes) and nonlinear pipes (edges) where each tank receives a direct inflow from the outside ($u_i$) and indirect inflows from other tanks through internal pipes. The edges functions $f_{i,j}$ are used to model nonlinear phenomena like nonlinear resistance, a valve, an orifice, etc. For example, the function $f(x)=\tanh(x)$ can model saturation in flows and belongs to the class $\Ftwo$. According to Theorem~\ref{thm:DAG_Volterra}, we only need to measure the outflow of the tank 4 (sink of the DAG) to identify all the nonlinear functions in the system, even if the Tank 1 has two different output pipes (i.e., node 1 has two out-neighbors).
The vertex-disjoint paths do not play a role in the identifiability conditions in the nonlinear case since the derivative of the functions allow us to distinguish the information coming from different paths. Furthermore, notice that the measurement of the sinks provides the information associated with the measurement of all the nodes in the DAG. However, unlike the linear case, the symmetric version of this result for full measurement and partial excitation does not hold. Namely, it is not sufficient to excite all the sources to identify all the network when all the nodes are measured \cite{vizuete2024partial}. Therefore, different identifiability conditions are required for the full measurement case with nonlinear functions.

\section{General digraphs}\label{sec:general_digraphs}

Now, we analyze general digraphs where cycles are allowed.
In this case, the delays $T_j$ in \eqref{eq:function_Fi} for $j\in\NN_i^p$ might not be finite, which would imply an infinite number of variables due to the past values of the inputs.

\begin{figure}[!t]
    \centering
    \begin{tikzpicture}
    [
roundnodes/.style={circle, draw=black!60, fill=black!5, very thick, minimum size=1mm},roundnode/.style={circle, dashed, draw=black!60, fill=red!15, very thick, minimum size=1mm}
]

\node[roundnodes](node1){1};
\node[roundnodes](node2)[right=2.5cm of node1]{2};

\draw[-{Classical TikZ Rightarrow[length=1.5mm]}] (node1) to[out=45, in=135, looseness=1] node [above,swap] { $f_{2,1}$} (node2);

\draw[-{Classical TikZ Rightarrow[length=1.5mm]}] (node2) to[out=225, in=315, looseness=1] node [below,swap] { $f_{1,2}$} (node1);

\end{tikzpicture}
    \caption{Cycle graph with 2 nodes, where the function associated with the measurement of node 2 depends on an infinite number of past inputs due to the cycle.}
    \label{fig:cycle_graph}
\end{figure}
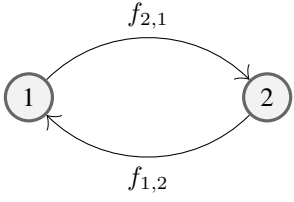

\begin{example}[Infinite inputs]\label{ex:infinite_delays}
    Let us consider the cycle graph in Fig.~\ref{fig:cycle_graph} with nonlinear functions of the form $f_{1,2}(y_2^{k-1})$, and $f_{2,1}(y_1^{k-1})$. The measurement of the node 2 provides an output of the form
    \begin{align}
        y_2^k&=u_2^{k-1}+f_{2,1}(u_1^{k-2}+f_{1,2}(y_2^{k-2}))\label{eq:NARMAX_model}\\
        &=u_2^{k-1}+f_{2,1}(u_1^{k-2}+f_{1,2}(u_2^{k-3}+f_{2,1}(y_1^{k-3}))),\nonumber
    \end{align}
    where we can observe that due to the cycles, the inputs will be repeated indefinitely with different delays, such that the function $F_2$ associated with the measurement is a function of an infinite number of variables. 
\end{example}

    Notice that the output \eqref{eq:NARMAX_model} corresponds to a NARX\footnote{Nonlinear autoregressive exogenous model.} \linebreak model \cite{nelles2020nonlinear}:
    $$
    y_2^k=F_{\mathrm{NARX}}(u_1^{k-2},u_2^{k-1},y_2^{k-2}),
    $$
    where past outputs are considered in the formulation.
    Nevertheless, in this work, we restrict our analysis to NFIR type models of the form \eqref{eq:function_Fi} that depend only on excitation signals (i.e., inputs) of the system.

Given that a general digraph can have cycles that depend on an infinite number of past values of the inputs, the design of experiments to obtain the ideal information associated with $F_i$ becomes more complex due to the infinite amount of information (past inputs) required.
To avoid this case and obtain identifiability conditions that could match realistic experimental settings, we make the following assumption, that is commonly used in system identification \cite{malti2006tutorial,khosravi2023kernel,pillonetto2023full,nelles2020nonlinear}.

\begin{assumption}[Network initially at rest]\label{ass:finite_variables}
The network is initially at rest at time $k=0$, which implies $y_i^k=0$ for all $k\le 0$  and all $i\in V$.
\end{assumption}
 Assumption~\ref{ass:finite_variables} covers the case of many networks that should be at rest at some time instant if external signals are no longer applied. According to our setting, this can be easily done by setting the values of all the excitation signals to zero and waiting an appropriate period of time so that the system is at rest again. Also, it is reasonable to assume that any experimental setting will consider the system at rest to begin with the application of the excitation signals, such that all the information obtained through the measured nodes is exclusively generated by the excitation signals, neglecting the influence of uncontrolled inputs. Moreover, Assumption~\ref{ass:finite_variables} is usually considered in the identification process to avoid the computation of cumbersome initial conditions \cite{campi2002finite}.

Under Assumption~\ref{ass:finite_variables}, when there are cycles in the network, the number of inputs that appear in the function associated with the measurement of a node is finite because the excitation signals can be nonzero only for $k>0$. This implies a truncation of the function $F_i$ in \eqref{eq:function_Fi} such that the output of a node $i$ is given by
\begin{multline}\label{eq:function_Fi_cycles}
    y_i^k=u_i^{k-1}+F_i^k(u_1^{k-2},\ldots,u_1^{0},\ldots,u_{n_i}^{k-2},\ldots,u_{n_i}^{0}),\\
    1,\dots,n_i\in \mathcal{N}_i^p,
\end{multline}
where $F_i^k$ depends on inputs only until the time $k=0$ (i.e., $u_i^0$). 
For instance, let us consider again the cycle graph in Fig.~\ref{fig:cycle_graph} with the functions of the Example~\ref{ex:infinite_delays} and the measurement of the node 2 at the time instant $k=4$. Under Assumption~\ref{ass:finite_variables}, the output is given by

\vspace{-3mm}

\small
\begin{align*}
        y_2^k&=u_2^{k-1}+F_2^k\\
        &=u_2^{k-1}+f_{2,1}(u_1^{k-2}+f_{1,2}(u_2^{k-3}+f_{2,1}(u_1^{k-4}+f_{1,2}(y_2^{k-4}))))\\
        &=u_2^{k-1}+f_{2,1}(u_1^{k-2}+f_{1,2}(u_2^{k-3}+f_{2,1}(u_1^{k-4}))),
    \end{align*}
\normalsize
which depends on a finite number of inputs since $y_2^{k-4}=0$. 

Unlike the case of DAGs, the functions $F_i^k$ in \eqref{eq:function_Fi_cycles} are parameterized by $k$. In this case we assume that for each $k$, when we measure a node $i$, we know perfectly the corresponding function $F_i^k$. Since we have a family of functions parameterized by $k$, the identifiability of edges must be guaranteed for each $F_i^k$. We extend the Definitions~\ref{def:set_measured_functions} and \ref{def:math_identifiability} for the family of functions.
\begin{definition}[Set of measured functions at $k$]\label{def:new_measured_nodes}
    Given a set of measured nodes  $\mathcal{N}^m$, the set of measured functions $F^k(\mathcal{N}^m)$ associated with $\mathcal{N}^m$  at a time instant $k$ is given by:
    $$
    F^k(\mathcal{N}^m):=\{F_i^k\;|\;i\in \mathcal{N}^m\}.
    $$
\end{definition}

The set $F^k$ represents how the outputs of measured nodes $y^k$ depend on the excitation signals from 0 to $k$.

\begin{definition}[Identifiability at $k$]\label{def:new_math_identifiability}
    Given a set of functions $\{ f \}=\{f_{i,j}\in \F\;|\;(i,j)\in E\}$ that generates $F^k(\NN^m)$ and another set of functions $\{ \tilde f \}=\{\tilde f_{i,j}\in\F\;|\;(i,j)\in E\}$ that generates $\tilde F^k(\NN^m)$. An edge $f_{i,j}$ is identifiable in a class $\F$ if there exists a $k$ such that $
    F^k(\NN^m)=\tilde F^k(\NN^m)
    $ implies $f_{i,j}=\tilde f_{i,j}$. A network $G$ is identifiable in a class $\F$ if there exists a $k$ such that 
    $
    F^k(\NN^m)=\tilde F^k(\NN^m),
    $
    implies
    $
    \{ f \}=\{ \tilde f \}
    $.
\end{definition}

Unlike DAGs where nonzero excitation signals $u_i^k$ at time instants $k<0$ are allowed, for general digraphs, the time instant considered for the measurement of a node should be long enough to let all the excitation signals to have an effect on the output of the measured node. If we consider again the cycle graph in Fig.~\ref{fig:cycle_graph}, but with a measurement of the node 2 performed at the time instant $k=2$, the output is given by:
\begin{align}
    y_2^k&=u_2^{k-1}+F_2^k\nonumber\\
    &=u_2^{k-1}+f_{2,1}(u_1^{k-2}+f_{1,2}(y_2^{k-2}))\nonumber\\
    &=u_2^{k-1}+f_{2,1}(u_1^{k-2}),\label{eq:output_F3_truncated}
\end{align}
since $y_2^{k-2}=0$. Even if the measurement of the node 2 could give us information also about the function $f_{1,2}$, this is not reflected on the function $F_2^k$ due to the short period of time chosen for the measurement. In this case, any function $f_{1,2}\in \F$ could generate the output \eqref{eq:output_F3_truncated}, which implies that $f_{1,2}$ is not identifiable at the time instant $k=2$.

In any digraph, we will be able to consider
\begin{equation}\label{eq:k_min}
\bar k_{\min}=m_{\max}+\text{diam}(G),    
\end{equation}
where $\text{diam}(G)$ is the diameter of the digraph $G$ and $m_{\max}$ is the largest delay of the nonlinear functions $f_{i,j}$ among all $i$ and any $j$. By choosing $k_{\min}=\bar k_{\min}$ we will guarantee the appearance of all the possible nonlinear functions $f_{i,j}$ with all their arguments in $F^{k_{\min}}(\NN^m)$ (see the proof of Lemma~\ref{lemma:equivalence}). 
Notice that the consideration of time instants $k> \bar k_{\min}$ for the analysis of identifiability is not restrictive since it is logical that in any experimental setting, we need to wait until the outputs of measured nodes reflect the influence of all the possible excitation signals. Also, since the definition of the time $k=0$ depends on the context of the analysis, it can be associated with the beginning of the application of the excitation signals in the nodes.

 Definition~\ref{def:set_measured_functions} for DAGs is encompassed by Definition~\ref{def:new_measured_nodes} since the information obtained from the measurement of a node (i.e., $F(\NN^m)$) is independent of the time instant $k$. Similarly, Definition~\ref{def:math_identifiability} is encompassed by Definitions~\ref{def:new_math_identifiability}, since for DAGs, identifiability at an arbitrary time instant $k$ guarantees identifiability for all time instants $k$ because the functions $F_i$ are not parameterized \linebreak by $k$.

During the analysis of identifiability, one of the most basic steps that guarantees the identification of all the network is the identifiability of the in-neighbors of measured nodes. This has been performed by isolating parts of the network with appropriate zero excitation inputs (see Lemma~\ref{lemma:sinks}). However, the extension of this methodology to general digraphs is not always possible since several inputs can be repeated due to the cycles.
For example, let us consider again the cycle graph in Fig.~\ref{fig:cycle_graph} with the functions $f_{2,1}(y_1^{k-1},y_1^{k-3})$ and $f_{1,2}(y_2^{k-1})$  where the measurement of the node 2 at the time instant $k=4$ provides the output:
\begin{align*}
    y_2^k&=u_2^{k-1}+f_{2,1}(y_1^{k-1},y_1^{k-3})\\
    &=u_2^{k-1}+f_{2,1}(u_1^{k-2}+f_{1,2}(u_2^{k-3}+f_{2,1}(u_1^{k-4})),u_1^{k-4}).
\end{align*}
Notice that we cannot identify $f_{2,1}$ by setting to zero the other inputs because the input $u_1^{k-4}$ is repeated due to the cycle. In this way, it is not possible to freely select the nonzero excitation signals for the analysis of a measured function $F_i$ and this is the main technical difficulty of the identifiability of graphs with cycles. In the linear case, this problem has been handled by measuring additional nodes of the network \cite{shi2021single}. Nevertheless, in the nonlinear case, the results can still be extended without measuring more nodes by considering additively separable functions.

\begin{assumption}[Additively separable functions]\label{ass:separable}
The nonlinear functions $f_{i,j}$ are additively separable:
\begin{equation}\label{eq:separation_analytic}
  f_{i,j}(y_j^{k-1},\ldots,y_j^{k-m_j})=\sum_{\ell=1}^{m_j}
  f_{i,j}^{\{\ell\}}(y_j^{k-\ell}),  
\end{equation} 
where for each function $f_{i,j}^{\{\ell\}}$, if it is nonzero, then it belongs to $\Ftwo$. 
\end{assumption}

The determination of an initial time and a final time for the identification process implies in some way that nodes associated with inputs $u^0$ stop being influenced through functions $f_{i,j}^{\{ \ell \}}$ associated with incoming edges at negative time instants, and nodes associated with inputs $u^k$ cease to influence other nodes through functions $f_{i,j}^{\{ \ell \}}$ associated with outgoing edges. This clearly entails an analogy with the notions of sources and sinks of DAGs, and this allows us to construct a special type of unfolded digraph associated with any digraph that will be used in the proofs of the results. This type of unfolded digraph is commonly used in dynamic Bayesian networks \cite{murphy2002dynamic} and Petri nets \cite{mcmillan1993using}, and its topology will be generated based on nodes with a nonempty set of out-neighbors (i.e., $\NN_i^{\text{out}}\neq \emptyset$), where only for its construction, we will assume that the first node denoted by $i_0$ satisfies $\NN_{i_0}^{\text{out}}\neq\emptyset$. To distinguish the nodes of the unfolded digraph from the nodes of the original digraph, we will use a subscript $i_m$ where $m$ will denote the time instant of creation of the copy of the node $i$.
Intuitively, in the unfolded digraph, there is a copy of each node for each time instant $t=1,\ldots,k-1$, and two nodes $v_t$ and $w_m$ are connected if the node $v_t$ at time $t$ influences directly the node $w_m$ at time $m$.
Notice that the unfolded digraph $H_i^k(G)$ is composed by several copies of all the nodes in $G$. However, some of the copies are disconnected (no incoming or outgoing edges) and their effect will be neglected in the analysis of identifiability. In Fig.~\ref{fig:general_digraph} we present the unfolded digraph $H_i^k(G)$ of a general digraph $G$ with several cycles and nonlinear functions of the form $f_{i,j}(y_j^{k-1})$ where we can see the several copies of the nodes grouped according to the delays of the excitation signals (each group corresponds to a different set of a multipartite digraph). The disconnected copies of nodes have been removed from $H_i^k(G)$, since they have no effect on the identifiability problem. 
Notice that the unfolded digraph $H_i^k(G)$ is unique and different for each measured \linebreak node $i$.

\begin{algorithm}[!t]\label{algo:adapted_tree}
    \caption{Unfolded digraph $H_i^k(G)$}
    \KwData{Graph $G=(V,E)$, measured node $i$, measurement time $k$}
    \KwResult{Unfolded digraph $H_i^k(G)=(V_H,E_H)$ }

\color{black}

Create a copy of the node $i$ denoted by $i_0\in V_H$ with excitation signal $u_i^k$

\For{$t=1,\ldots,k-1$}{

    Create copies of the nodes $v\in V$ denoted by $v_t\in V_H$ with excitation signal $u_v^{k-t}$
    
    \For{each node $v_t\in V_H$ and each node $w_m\in V_H$ with $\NN_{w_m}^{out}\neq \emptyset$ and $m=0,\ldots,t-1$}{

            \If {$f_{w,v}^{\{t-m\}}(y_v^{k-t+m})\neq 0$ }{
                Connect $w_m$ and $v_t$ with $f_{w,v}^{\{t-m\}}(y_v^{k})$ located in the edge $(w_m,v_t)\in E_H$
               
            }
}

        }
    
\end{algorithm}

\begin{definition}[Unfolded digraph]
    Under Assumption~\ref{ass:separable}, an unfolded digraph at a node $i$ and the time instant $k$   of a digraph $G$ denoted by $H_i^k(G)$ is a digraph generated through Algorithm~\ref{algo:adapted_tree}
    where the nonlinear functions of the edges have delay 0.
\end{definition}

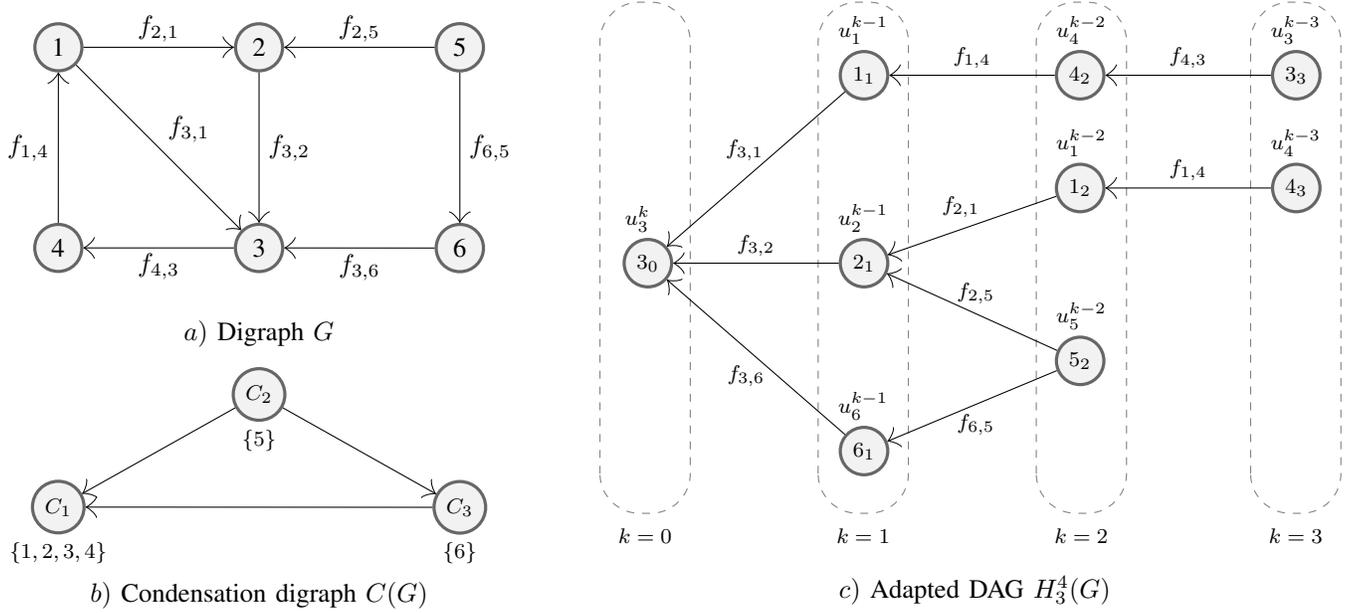
\begin{figure*}
    \centering
    \begin{tikzpicture}
    [
roundnodes/.style={circle, draw=black!60, fill=black!5, very thick, minimum size=1mm},roundnode/.style={circle, draw=white!60, fill=white!5, very thick, minimum size=1mm},roundnodes2/.style={circle, draw=black!60, fill=black!5, very thick, minimum size=0.6cm},roundnodes3/.style={circle, dashed, draw=black!60, fill=red!15, very thick, minimum size=0.6cm},roundnodes4/.style={circle, dashed, very thick, minimum size=0.6cm}
]
\node[roundnodes](node1){1};
\node[roundnodes](node2)[right=of node1,yshift=0mm,xshift=1cm]{2};
\node[roundnodes](node3)[below=of node2,yshift=-1cm,xshift=0cm]{3};
\node[roundnodes](node4)[left=of node3,yshift=0mm,xshift=-1cm]{4};
\node[roundnodes](node5)[right=of node2,yshift=0mm,xshift=1cm]{5};
\node[roundnodes](enode1)[below=of node5,yshift=-1cm,xshift=0cm]{6};
\node(graph)[below=of node3,yshift=0.5cm,xshift=0cm]{$a) $ Digraph $G$};

\draw[-{Classical TikZ Rightarrow[length=1.5mm]}] (node1) to node [swap,yshift=2.5mm] {$f_{2,1}$} (node2);
\draw[-{Classical TikZ Rightarrow[length=1.5mm]}] (node2) to node [right,swap] {$f_{3,2}$} (node3);
\draw[-{Classical TikZ Rightarrow[length=1.5mm]}] (node3) to node [swap,yshift=-2.5mm] {$f_{4,3}$} (node4);
\draw[-{Classical TikZ Rightarrow[length=1.5mm]}] (node4) to node [left,swap] {$f_{1,4}$} (node1);
\draw[-{Classical TikZ Rightarrow[length=1.5mm]}] (node1) to node [right,swap,yshift=2.5mm] {$f_{3,1}$} (node3);
\draw[-{Classical TikZ Rightarrow[length=1.5mm]}] (node5) to node [swap,yshift=2.5mm] {$f_{2,5}$} (node2);
\draw[-{Classical TikZ Rightarrow[length=1.5mm]}] (node5) to node [right,swap] {$f_{6,5}$} (enode1);
\draw[-{Classical TikZ Rightarrow[length=1.5mm]}] (enode1) to node [swap,yshift=-2.5mm] {$f_{3,6}$} (node3);

\footnotesize
\node[roundnodes2](cnode1)[below=of node4,yshift=-1.75cm,xshift=0cm]{$C_1$};
\node(C1)[below=of cnode1,yshift=1cm,xshift=0cm]{$\{ 1,2,3,4\}$ };
\node[roundnodes2](cnode2)[below=of node3,yshift=-0.25cm,xshift=0cm]{$C_2$};
\node(C2)[below=of cnode2,yshift=1cm,xshift=0cm]{$\{ 5\}$ };
\node[roundnodes2](cnode3)[below=of enode1,yshift=-1.75cm,xshift=0cm]{$C_3$};
\node(C3)[below=of cnode3,yshift=1cm,xshift=0cm]{$\{ 6\}$ };

\normalsize

\node(graph1)[below=of cnode2,yshift=-1cm,xshift=0cm]{$b) $ Condensation digraph $C(G)$};

\draw[-{Classical TikZ Rightarrow[length=1.5mm]}] (cnode2) to node [right,swap,yshift=2.5mm] {} (cnode1);
\draw[-{Classical TikZ Rightarrow[length=1.5mm]}] (cnode2) to node [right,swap,yshift=2.5mm] {} (cnode3);
\draw[-{Classical TikZ Rightarrow[length=1.5mm]}] (cnode3) to node [right,swap,yshift=2.5mm] {} (cnode1);

\footnotesize

\node[roundnodes2](node6)[below=of node5,yshift=-1.2cm,xshift=2.5cm]{$3_0$} (7.7,-2.55) node[above] {$u_3^{k}$};
\node[roundnodes2](node7)[right=of node6,yshift=2.5cm,xshift=1.2cm]{$1_1$} (10.7,-0.05) node[above] {$u_1^{k-1}$};
\node[roundnodes2](node8)[right=of node6,yshift=-2.5cm,xshift=1.2cm]{$6_1$} (10.7,-5.05) node[above] {$u_6^{k-1}$};
\node[roundnodes2](node9)[right=of node6,yshift=0cm,xshift=1.2cm]{$2_1$} (10.7,-2.55) node[above] {$u_2^{k-1}$};
\node[roundnodes2](node10)[right=of node8,yshift=1.2cm,xshift=1.2cm]{$5_2$} (13.6,-3.85) node[above] {$u_5^{k-2}$};
\node[roundnodes2](node11)[right=of node9,yshift=1.0cm,xshift=1.2cm]{$1_2$} (13.6,-1.55) node[above] {$u_1^{k-2}$};
\node[roundnodes2](node12)[right=of node7,yshift=0cm,xshift=1.2cm]{$4_2$} (13.6,-0.05) node[above] {$u_4^{k-2}$};
\node[roundnodes2](node14)[right=of node12,yshift=0.0cm,xshift=1.2cm]{$3_3$} (16.45,-0.05) node[above] {$u_3^{k-3}$};
\node[roundnodes2](enode2)[right=of node11,yshift=0cm,xshift=1.2cm]{$4_3$} (16.45,-1.55) node[above] {$u_4^{k-3}$};

\draw[black!50,dashed,rounded corners=15pt]
  (7.2,-6.2) rectangle ++(1.2,6.8);
  \draw[black,dashed,rounded corners=15pt] (7.8,-6.7) node[above] {$k=0$} ;
  \draw[black!50,dashed,rounded corners=15pt]
  (10.1,-6.2) rectangle ++(1.2,6.8) (0,0);
  \draw[black,dashed,rounded corners=15pt] (10.7,-6.7) node[above] {$k=1$} ;
  \draw[black!50,dashed,rounded corners=15pt]
  (13,-6.2) rectangle ++(1.2,6.8) (0,0);
  \draw[black,dashed,rounded corners=15pt] (13.6,-6.7) node[above] {$k=2$} ;
  \draw[black!50,dashed,rounded corners=15pt]
  (15.85,-6.2) rectangle ++(1.2,6.8) (0,0);
  \draw[black,dashed,rounded corners=15pt] (16.45,-6.7) node[above] {$k=3$} ;

\draw[-{Classical TikZ Rightarrow[length=1.5mm]}] (node7) to node [left,swap,xshift=2mm,yshift=2.5mm] {$f_{3,1}$} (node6);
\draw[-{Classical TikZ Rightarrow[length=1.5mm]}] (node9) to node [above,swap,xshift=0mm,yshift=0mm] {$f_{3,2}$} (node6);
\draw[-{Classical TikZ Rightarrow[length=1.5mm]}] (node8) to node [left,swap,xshift=2.2mm,yshift=-2.5mm] {$f_{3,6}$} (node6);
\draw[-{Classical TikZ Rightarrow[length=1.5mm]}] (node10) to node [left,swap,xshift=4mm,yshift=2.5mm] {$f_{2,5}$} (node9);
\draw[-{Classical TikZ Rightarrow[length=1.5mm]}] (node10) to node [left,swap,xshift=4mm,yshift=-2.5mm] {$f_{6,5}$} (node8);
\draw[-{Classical TikZ Rightarrow[length=1.5mm]}] (node11) to node [left,swap,xshift=2mm,yshift=2.5mm] {$f_{2,1}$} (node9);
\draw[-{Classical TikZ Rightarrow[length=1.5mm]}] (node12) to node [above,swap,xshift=0mm,yshift=0mm] {$f_{1,4}$} (node7);
\draw[-{Classical TikZ Rightarrow[length=1.5mm]}] (node14) to node [above,swap,xshift=0mm,yshift=0mm] {$f_{4,3}$} (node12);
\draw[-{Classical TikZ Rightarrow[length=1.5mm]}] (enode2) to node [above,swap,xshift=0mm,yshift=0mm] {$f_{1,4}$} (node11);

\normalsize

\node(graph2)[below=of node11,yshift=-3.7cm,xshift=-1.4cm]{$c) $ Unfolded digraph $H_3^4(G)$};

\end{tikzpicture}
    \caption{General digraph $G$ with its corresponding condensation digraph $C(G)$ and unfolded digraph $H_3^4(G)$ constructed at the node 3 and the time instant $k=4$. The identifiability of $G$ at $k=4$ is equivalent to the identifiability of $H_3^4(G)$ where several copies of the nodes of $G$ are created for its construction with inputs determined by the corresponding delays.
    The unfolded digraph is also a multipartite digraph with sets $\{ 3_0\}$, $\{ 1_1,2_1,6_1\}$, $\{ 4_2,1_2,5_2\}$ and $\{ 3_3,4_3\}$, 
    which are constructed by grouping nodes with the same delays on the inputs. 
    }
    \label{fig:general_digraph}
\end{figure*}

The unfolded digraph can also be seen as a multipartite digraph where each set is composed by the nodes with excitation signals having the same delays. Since the edges can only go from a node to another node with a smaller delay, there are no cycles and the unfolded digraph is clearly a DAG, which allows us to analyze the identifiability of the network in a framework similar to Section \ref{sec:DAG}. The utility of this mathematical construction is highlighted by the following lemma.

\begin{lemma}\label{lemma:equivalence}
Given a digraph $G$ and a measured node $i$. Under Assumptions~\ref{ass:finite_variables} and \ref{ass:separable}, for $i\in \NN^m$ and any $k>\bar k_{\min}$, the edges on all the walks that arrive to the node $i$ in $G$ are identifiable if the unfolded digraph $H_i^k(G)$ is identifiable.
\end{lemma}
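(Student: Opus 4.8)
The plan is to show that the function $F_i^k$ obtained from measuring $i$ in $G$ is, viewed as a function of the free excitation variables $\{u_v^{k-t}\}$, identical to the function realised at the sink $i_0$ of the acyclic digraph $H_i^k(G)$. Once this correspondence is established, the assumed identifiability of $H_i^k(G)$ transfers directly to the edges of $G$ lying on walks into $i$, via the additive separation \eqref{eq:separation_analytic}.

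First I would establish the correspondence. Starting from \eqref{eq:nonlinear_model} and applying \eqref{eq:separation_analytic}, the output $y_i^k$ is a sum of terms $f_{i,j}^{\{\ell\}}(y_j^{k-\ell})$. Substituting recursively each delayed state $y_j^{k-\ell}$ by its own expression and using Assumption~\ref{ass:finite_variables} to terminate every branch as soon as a delayed state reaches a time index $\le 0$ (where it equals $0$), the recursion closes after finitely many steps and yields exactly the truncated function $F_i^k$ of \eqref{eq:function_Fi_cycles}. Each substitution of a state $y_v^{k-t}$ introduces a copy $v_t$ fed by the free excitation $u_v^{k-t}$, and each nonzero component $f_{w,v}^{\{t-m\}}$ becomes the delay-$0$ label of an edge between $w_m$ and $v_t$; this is precisely the bookkeeping of Algorithm~\ref{algo:adapted_tree}. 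Hence $F_i^k$ coincides with the sink function of $H_i^k(G)$ in the variables $\{u_v^{k-t}\}$, so knowing one is equivalent to knowing the other.

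Next I would verify coverage: for any $k>\bar k_{\min}=m_{\max}+\text{diam}(G)$, every edge $(w,v)$ on a walk into $i$ and every nonzero component $f_{w,v}^{\{\ell\}}$ with $\ell\le m_v$ appears on at least one edge of $H_i^k(G)$. Since a node $v$ with a walk to $i$ is reachable within $\text{diam}(G)$ steps, a copy $v_t$ with $t\le\text{diam}(G)$ is present, and the choice \eqref{eq:k_min} leaves a window of length at least $m_{\max}$ so that the deepest-delay component of each such edge still falls inside the time range $[0,k]$. This counting argument, where \eqref{eq:k_min} is used, is the main bookkeeping step and the part requiring the most care.

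Finally I would transfer identifiability. Let $\{f\}$ and $\{\tilde f\}$ generate $F^k(\NN^m)$ and $\tilde F^k(\NN^m)$ respectively, with $F_i^k=\tilde F_i^k$. Through the correspondence, these two sets induce two edge-assignments on $H_i^k(G)$ --- in which all copies of a common component are forced to be equal --- realising the same sink function. Since the assumed identifiability of $H_i^k(G)$ determines every edge label uniquely, in particular $f_{w,v}^{\{\ell\}}=\tilde f_{w,v}^{\{\ell\}}$ for every component covered in the previous step; summing over $\ell$ via \eqref{eq:separation_analytic} then gives $f_{w,v}=\tilde f_{w,v}$ for every edge on a walk into $i$. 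The main obstacle is thus the exact identification of the unfolded recursion with Algorithm~\ref{algo:adapted_tree} together with the coverage count; the identifiability of $H_i^k(G)$ itself is not proved here but deferred to the acyclic theory of Section~\ref{sec:DAG}.
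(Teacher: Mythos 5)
Your proposal is correct and follows essentially the same route as the paper's proof: establish that $F_i^k$ coincides with the sink function of the unfolded digraph $H_i^k(G)$ (the paper's $\hat F_j^{(\ell_j)}=F_j^{(\ell_j)}$ step), invoke $k>\bar k_{\min}$ so that every nonzero component $f_{w,v}^{\{\ell\}}$ on a walk into $i$ appears as an edge of $H_i^k(G)$, and then transfer the assumed identifiability of $H_i^k(G)$ back to the edges of $G$ via the additive decomposition. Your write-up is in fact slightly more explicit than the paper's on two points it leaves implicit --- that the two sets $\{f\},\{\tilde f\}$ induce edge-assignments on $H_i^k(G)$ in which all copies of a component carry the same function (a restriction under which identifiability of $H_i^k(G)$ still applies), and the coverage count behind the paper's bare assertion that $\hat m_j=m_j$ --- but the substance and structure of the argument are identical.
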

\begin{proof}
    Under Assumption~\ref{ass:finite_variables}, the function $F_i^k$ associated with the measurement of a node $i$ depends on a finite number of inputs.
    The measurement of the node $i$ in $G$ provides an output of the form:
\begin{align}
y_i^k&=u_i^{k-1}+F_i^k\nonumber\\
&=u_i^{k-1}+\sum_{j\in\NN_i}
f_{i,j}(y_j^{k-1},\ldots,y_j^{k-m_j})\nonumber\\
&=u_i^{k-1}+\sum_{j\in\NN_i}\sum_{\ell_j=1}^{\hat m_j} f_{i,j}^{\{\ell_j\}}(y_1^{k-\ell_j})\nonumber\\
&=u_i^{k-1}+\sum_{j\in\NN_i}\sum_{\ell_j=1}^{\hat m_j} f_{i,j}^{\{\ell_j\}}\prt{u_{j}^{k-\ell_j-1}+F_{j}^{(\ell_j)}}.\label{eq:intermediate_separable}
\end{align}
Since $k>\bar k_{\min}$, we guarantee that $\hat m_j=m_j$ such that all the possible nonlinear functions $f_{i,j}^{\{\ell_j\}}$ appear in \eqref{eq:intermediate_separable}.  
   Let us assume that there exists a set $\{\tilde f\}\neq \{ f\}$ such that $F_i^k=\tilde F_i^k$, which implies:
\begin{multline}\label{eq:intermediate_separable_2}
\sum_{j\in\NN_i}\sum_{\ell_j=1}^{m_j} f_{i,j}^{\{\ell_j\}}\prt{u_{j}^{k-\ell_j-1}+F_{j}^{(\ell_j)}}=\\
\sum_{j\in\NN_i}\sum_{\ell_j=1}^{m_j} \tilde f_{i,j}^{\{\ell_j\}}\prt{u_{j}^{k-\ell_j-1}+\tilde F_{j}^{(\ell_j)}}.
\end{multline}
    Now, let us consider the unfolded digraph $H_i^k(G)$. The measurement of the sink $i$ in $H_i^k(G)$ provides the output:
    \begin{equation}\label{eq:node_i_adapted_tree}
    y_i^k=u_i^{k-1}+\sum_{j\in\NN_i}\sum_{\ell_j=1}^{m_j} f_{i,j}^{\{\ell_j\}}\prt{u_{j}^{k-\ell_j-1}+\hat F_{j}^{(\ell_j)}}.    
    \end{equation}
    Since the unfolded digraph $H_i^k(G)$ guarantees that each node is connected by copies of the same in-neighbors in $G$ where the delays of the excitation signals are given by the delays of the nonlinear functions and the time instants in which the nodes were created, we can guarantee that the outputs of all the nodes in $G$ and $H_i^k(G)$ are the same with the corresponding delays, which implies $\hat F_j^{(\ell_j)}=F_j^{(\ell_j)}$ and hence $\hat F_j=F_j$. 
    Then, \eqref{eq:node_i_adapted_tree} implies \eqref{eq:intermediate_separable_2}. Since $k>\bar k_{\min}$, we have that for each $F_j$, all the possible nonlinear functions $f_{j,p}^{\{\ell_p\}}$ associated with an in-neighbor $p$ of $j$ appear in $F_j$.  Hence, if the nonlinear functions of the edges are identifiable in $H_i^k(G)$, they are also identifiable in $G$.
\end{proof}

The following theorem provides identifiability conditions for any digraph $G$ based on the measurement of nodes associated with the condensation digraph $C(G)$. Since the condensation digraph $C(G)$ is a DAG, it will allow us to choose the measured nodes (sinks) such that the information of all the walks in $G$ appear in $F(\NN^m)$. Then, the unfolded digraph $H_i^k$ is used to prove the identifiability of all the walks that end at a measured node $i$ chosen according to $C(G)$. 

\begin{theorem}[General digraph]\label{thm:general_digraph}
Under Assumptions~\ref{ass:finite_variables} and \ref{ass:separable}, for any $k> \bar k_{\min}$, a digraph is identifiable in the class $\Ftwo$, if and only if one node is measured in every sink of the condensation digraph.
\end{theorem}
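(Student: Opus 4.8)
The plan is to prove the two implications separately: necessity by exhibiting a non‑identifiable edge whenever a sink of the condensation is left unmeasured, and sufficiency by covering every edge with a walk to a measured node and then reducing to the already‑established DAG theory via the unfolded digraph.

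\emph{Necessity.} I would argue by contraposition: assume some sink $C^\ast$ of the condensation digraph $C(G)$ contains no measured node, and produce an unidentifiable edge. Because $C^\ast$ is a sink of $C(G)$, no edge of $G$ leaves the strongly connected component $C^\ast$, so for any $a\in C^\ast$ the output $y_a^k$ can influence, directly or along walks, only the outputs of nodes inside $C^\ast$. Since none of these nodes is measured, $F_i^k$ is independent of every $y_a^k$ with $a\in C^\ast$ for all $i\in\NN^m$. Choosing any edge $(a,b)\in E$ with head $a\in C^\ast$ (an incoming edge of the sink node if $\abs{C^\ast}=1$, or an internal edge of the strongly connected component if $\abs{C^\ast}\ge 2$) and replacing $f_{a,b}$ by $2f_{a,b}$ yields a new set $\{\tilde f\}$; this perturbation stays in $\Ftwo$ componentwise, differs from $f_{a,b}$ since $f_{a,b}\not\equiv 0$, and leaves every $F_i^k$, $i\in\NN^m$, unchanged. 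Hence $F^k(\NN^m)=\tilde F^k(\NN^m)$ while $f_{a,b}\neq\tilde f_{a,b}$, so $f_{a,b}$ is not identifiable at any $k$. This generalises Proposition~\ref{prop:sinks_sources}.

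\emph{Sufficiency.} Suppose one node $i^\ast$ is measured in each sink of $C(G)$. First I would show that every edge of $G$ lies on a walk ending at some measured node: an arbitrary edge $(a,b)$ has its head $a$ in some strongly connected component, which, since $C(G)$ is a finite DAG, admits a directed path in $C(G)$ to a sink $C^\ast$; lifting this path back to $G$ and using strong connectivity inside $C^\ast$ produces a walk from $a$ to the measured node $i^\ast\in C^\ast$, so that $(a,b)$ lies on a walk arriving at $i^\ast$. Next, for each measured node $i^\ast$, the unfolded digraph $H_{i^\ast}^k(G)$ is a DAG (it is multipartite, ordered by delay, with every edge strictly decreasing the delay) whose unique sink is the copy $i^\ast_0$ of $i^\ast$, because the only layer at delay $0$ is $i^\ast_0$ and every retained copy has a directed path to it. Measuring $i^\ast$ therefore measures the unique sink of $H_{i^\ast}^k(G)$, so Theorem~\ref{thm:DAG_Volterra} makes $H_{i^\ast}^k(G)$ identifiable in $\Ftwo$. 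By Lemma~\ref{lemma:equivalence}, valid for $k>\bar k_{\min}$ under Assumptions~\ref{ass:finite_variables} and \ref{ass:separable}, all edges on walks arriving at $i^\ast$ are then identifiable. Taking the union over the measured sinks of $C(G)$ identifies every edge, giving $\{f\}=\{\tilde f\}$.

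\emph{Main obstacle.} Most of the heavy lifting is delegated to Lemma~\ref{lemma:equivalence} and Theorem~\ref{thm:DAG_Volterra}, so the genuine work is the bookkeeping that glues them together. The key technical point of sufficiency is verifying that the only sink of the unfolded digraph $H_{i^\ast}^k(G)$ is the measured copy $i^\ast_0$: this is exactly what lets a single measurement of $i^\ast$ satisfy the ``all sinks measured'' hypothesis of Theorem~\ref{thm:DAG_Volterra}. The second delicate point is the coverage argument, i.e.\ that passing through the condensation sinks and exploiting strong connectivity really places \emph{every} edge of $G$ on a walk to a measured node; this is where the acyclicity of $C(G)$ is essential. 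For necessity, the care lies in confirming that the perturbation of $f_{a,b}$ remains confined to the unmeasured sink component and never leaks to a measured node, which is guaranteed precisely because $C^\ast$ is a sink of $C(G)$.
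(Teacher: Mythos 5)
Your proof is correct, and its sufficiency half is essentially the paper's own argument: cover every edge of $G$ by a walk to a measured node via the condensation digraph, then combine Lemma~\ref{lemma:equivalence} with Theorem~\ref{thm:DAG_Volterra} applied to the unfolded digraph. You do, however, go beyond the paper in two respects. First, the paper's proof never actually establishes the ``only if'' direction---it argues sufficiency only---whereas you give a complete necessity argument: if a sink component $C^\ast$ of $C(G)$ contains no measured node, replacing one function $f_{a,b}$ with head $a\in C^\ast$ by $2f_{a,b}$ stays in $\Ftwo$ (also componentwise under Assumption~\ref{ass:separable}), differs from $f_{a,b}$, and alters no measured $F_i^k$ because no edge of $G$ leaves $C^\ast$; this correctly generalises Proposition~\ref{prop:sinks_sources} from sinks of $G$ to sinks of $C(G)$. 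Second, you verify explicitly that $i^\ast_0$ is the unique sink of $H_{i^\ast}^k(G)$---the exact hypothesis Theorem~\ref{thm:DAG_Volterra} needs---which the paper leaves implicit in Algorithm~\ref{algo:adapted_tree} (every retained copy has a directed path to $i^\ast_0$, and edges strictly decrease the delay, so only $i^\ast_0$ lacks outgoing edges). Two small points to polish in your write-up: the edges of the unfolded digraph carry functions with zero delay, so you should note, as the paper does right after Theorem~\ref{thm:DAG_Volterra}, that the DAG result covers zero-delay arguments; and when lifting a path of $C(G)$ back to $G$, strong connectivity must be used inside each intermediate component as well, not only inside $C^\ast$.
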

\begin{proof}
Let us consider an arbitrary digraph $G$. 
    By measuring one node of each sink of the condensation digraph $C(G)$, we can guarantee that there is a walk from any node of the network to one of the measured nodes. Let us assume that we measure one node $i$ of a sink of $C(G)$ at an arbitrary time instant $k> \bar k_{\min}$. According to Lemma~\ref{lemma:equivalence}, the walks that arrive to the node $i$ are identifiable if the unfolded digraph $H_i^k(G)$ is identifiable at $k>\bar k_{\min}$. By Theorem~\ref{thm:DAG_Volterra}, the unfolded digraph $H_i^k(G)$ is identifiable with the measurement of the sink corresponding to the node $i$, which implies that the edges of all the walks in $G$ that end in the node $i$ are identifiable, and since all the walks end in a sink of $C(G)$, we identify all the network at the time instant $k$. Since $k$ was arbitrary, the procedure can be applied for other $k>\bar k_{\min}$, such that the network is identifiable.
\end{proof}

The static model is a particular case of additively separable functions, where the function $f_{i,j}$  in \eqref{eq:separation_analytic} only depends \linebreak on $y_j^{k-1}$:
\begin{equation}\label{eq:nonlinear_model_static}
y_i^k=\sum_{j\in \mathcal{N}_i}f_{i,j}(y_j^{k-1})+u_i^{k-1}. 
\end{equation}
Therefore, Theorem~\ref{thm:general_digraph} allows us to complete the results of identifiability of networks in the static case analyzed in \cite{vizuete2023nonlinear}, where the case of general digraphs was missing.

\begin{remark}[Self-loops]
    In the case of nonlinear state space representations where dynamics at the level of nodes can be present as self-loops, the output of each node $i$ is given by:
\begin{equation}\label{eq:nonlinear_model_including_i}
y_i^k=\sum_{j\in \mathcal{N}_i^i}f_{i,j}(y_j^{k-1},y_j^{k-2},\ldots,y_j^{k-m_j})+u_i^{k-1}, 
\end{equation}
where $\NN_i^i$ is the set of in-neighbors of the node $i$ including $i$. Notice that in the associated unfolded digraph, the set of in-neighbors of a node $i$ is just extended to $\NN_i^i$ including the same node $i$ connected by $f_{i,i}$. Therefore, Theorem~\ref{thm:general_digraph} could also be used for this different framework. However, notice that digraphs with self-loops are no longer simple digraphs and that is why they are not consider in the model class of this work.  
\end{remark}

Similarly to the case of general DAGs, the identifiability conditions of Theorem~\ref{thm:general_digraph} are also weaker than in the linear case where the identifiability of $\abs{\NN_i^{out}}$ outgoing edges of a node $i$ requires $\abs{\NN_i^{out}}$ vertex-disjoint paths to measured nodes. In the nonlinear case, we only need to measure a node of each sink of the condensation digraph, independently of the number of out-neighbors of the nodes in the network. For instance, in the digraph $G$ in Fig.~\ref{fig:general_digraph}, it is sufficient to measure the node 3 that belongs to the sink of $C(G)$ to identify all the network, even if the node 5 has two out-neighbors. The role of the sink in the condensation digraph is related to the notion of the unfolded digraph, which is a DAG, used in the derivation of the identifiability conditions.

\section{Conclusions and future work}\label{sec:conclusions}

We have shown fundamental differences in the identifiability conditions for nonlinear networks with respect to the linear case. Specifically, we have shown for pure nonlinear functions that identifiability was determined by measuring the sinks in the case of DAGs, or nodes in the sinks of the condensation digraph for general digraphs assuming that the nonlinear functions are additively separable and the network is at rest in the last case. These identifiability conditions are weaker than in the linear case and are due to the fact that the nonlinearity of the functions allow us to distinguish the information coming to a node through different paths.

Our approach of the nonlinear network identifiability is novel, and there are consequently many open problems.
Clearly, the family of functions considered for the identifiability problem plays a crucial role. The first extension would be the analysis of general digraphs when the nonlinear functions are not necessarily additively separable. Also, in a more general framework, it would be important to analyze more general classes of functions that are not differentiable. 
Finally, the derivation of identifiability conditions for non-additive dynamical models and node dynamics \cite{vanelli2025local} are definitely interesting areas of research.

\bibliographystyle{IEEEtran}
\bibliography{TAC}

\appendix

\subsection{Proof of Lemma~\ref{lemma:multivariate}}\label{app:1}

First, we introduce some definitions and auxiliary results that will be used in the proof of Lemma~\ref{lemma:multivariate}.

We define the mapping $\Psi:\R^s\to \R^p$:
\begin{equation}
\Psi(\Y):=(\psi(\y_1,\ldots,\y_r),\ldots,\psi(\tau^{p-1}\y_1,\ldots,\tau^{p-1}\y_r)),    
\end{equation}
where $\psi=\tilde g-g$ and $\Y=(\y_1,\ldots,\y_r,\ldots)$.
\begin{lemma}\label{lemma:basis}
    Given a non-constant continuously differentiable function  $\psi:\R^q\to\R$. Then, $\mathrm{Im}(\Psi)$ spans the vector \linebreak space $\R^p$.
\end{lemma}
\begin{proof}
    Let us consider the Jacobian of the mapping $J_\Psi$.
    If $\psi$ is not constant, there must exist a variable $y_{j,\ell}$ such that $\frac{\partial \psi}{\partial y_{j,\ell}}\not\equiv 0$, where $y_{j,\ell}$ denotes the variable that corresponds to the position $\ell$ in the group of variables $\y_j$. Without loss of generality, let us assume that $y_{j,\ell}$ is the first variable such that $\frac{\partial \psi}{\partial y_{j,\ell}}\not\equiv 0$. Then, there exists a point $\Y^*=(\y_1^*,\ldots,\y_r^*,\ldots)$ such that $\frac{\partial \psi}{\partial y_{j,\ell}}(\Y^*)\not\equiv  0$, and for this point $\Y^*$, the Jacobian $J_ \Psi(\Y^*)$ contains the vector $(\alpha^*,0,\ldots,0)$ corresponding to the column of $\frac{\partial \psi}{\partial y_{j,\ell}}(\Y^*)$, and hence, the span of $J_\Psi$ contains $e_1=(1,0,\ldots,0)$. Since the Jacobian $J_\Psi$ is a Toeplitz matrix, for the point $\Y^{**}=(0,\y_1^*,\ldots,\y_r^*,\ldots)$, the Jacobian $J_\Psi(\Y^{**})$ contains the vector $(*,\alpha^*,0,\ldots,0)$, and hence, the span of $J_\Psi$ contains $e_2=(0,1,0,\ldots,0)$ given that it also contains $e_1$. By following this procedure, we can see that the span of $\mathrm{Im}(J_\Psi)$ contains all the standard basis $e_i$ of $\R^p$. Thus, $\mathrm{Im}(\Psi)$ spans the vector space $\R^p$.
\end{proof}

The following technical lemma will be used in the proof of Lemma~\ref{lemma:contains_ball}.

\begin{lemma}[Theorem 2.35 \cite{laczkovich2017real}]\label{lemma:mapping}
    Let $H\subset \R^p$ and let $f:H\to\R^q$, where $p\ge q$. If $f$ is continuously differentiable at the point $a\in \text{int} \;H$ and the Jacobian $J_f(a):\R^p\to\R^q$ is surjective, then the range of $f$ contains a neighborhood of $f(a)$.
\end{lemma}

\begin{lemma}\label{lemma:contains_ball}
    If $S=\mathrm{Im}( \Psi)$ is not contained in any vector space of dimension $p-1$, then the sumset $\underbrace{S+S+\cdots+S}_{p \text{ times}}$  contains a ball $B(c,\epsilon)$. 
\end{lemma}
\begin{proof}
    Let us define the mapping $\Theta(\Y_1,\ldots,\Y_p):=\Psi(\Y_1)+\Psi(\Y_2)+\cdots+\Psi(\Y_p)$.
    By definition of the sumset we have $\underbrace{S+S+\cdots+S}_{p \text{ times}}=\{ \Psi(\Y_1)+\Psi(\Y_2)+\cdots+\Psi(\Y_p)\}=\mathrm{Im}(\Theta)$. According to Lemma~\ref{lemma:basis}, there exists a point $(\Y_1^*,\ldots,\Y_p^*)$ such that $\{ \nabla \Psi(\Y_1^*),\ldots,\nabla\Psi(\Y_p^*)\}$ contains a basis of $\R^p$. Since at the point $(\Y_1^*,\ldots,\Y_p^*)$, the Jacobian   $J_ \Theta=(J_ \Psi(\Y_1^*)  \quad \cdots \quad J_ \Psi(\Y_n^*))$ is full rank, then by Lemma~\ref{lemma:mapping}, $\mathrm{Im}(\Theta)$ contains a ball  $B(c,\epsilon)$.  
\end{proof}

Finally, we state the proof of Lemma~\ref{lemma:multivariate}.

\begin{myproof}{Lemma~\ref{lemma:multivariate}}

Let us consider the change of variables:
    $$
    z_1=x_1+g(\y_1,\ldots,\y_r);
    $$
    $$
    \vdots
    $$
    $$
    z_p=x_p+g(\tau^{p-1}\y_1,\ldots,\tau^{p-1}\y_r).
    $$
    Then, \eqref{eq:equality_multivariate} can be written as:
    \begin{multline}\label{eq:app1_1}
    f(z_1,\ldots,z_p)=
        f(z_1+\psi(\y_1,\ldots,\y_r),\ldots,\\
        z_p+\psi(\tau^{p-1}\y_1,\ldots,\tau^{p-1}\y_r)),    
    \end{multline}
    \begin{equation}\label{eq:z_Psi}
        f(\z)=f(\z+\Psi(\Y)) \text{ for all } \z\in\R^p,
    \end{equation}
    where $\psi=\tilde g-g$ and $\z:=(z_1,\ldots,z_p)$. If $\psi\equiv0$, then it corresponds to one of the possible results of the Lemma. Now, let us assume that $\psi\not\equiv0$, and hence a non-constant function. From \eqref{eq:z_Psi} we have that:
    \begin{align}
        f(\z)&=f(\z+\Psi(\Y_1)+\Psi(\Y_2)+\cdots+\Psi(\Y_p))\nonumber\\&=f(\z+\Theta(\Y_1,\ldots,\Y_p)),
    \end{align}
    where according to Lemma~\ref{lemma:contains_ball}, the image of the mapping $\Theta$ contains a ball $B(c,\epsilon)$. This implies that for an arbitrary point $\z^*$, we get
    \begin{equation*}
    f(\z^*-c)=f(\z^*-c+\vv) \text{ for all } \vv\in B(c,\epsilon).  
    \end{equation*}
    Notice that any point $\w$ in the ball $B(\z^*,\epsilon)$ can  be written as $\z^*-c+\vv$ for some $\vv\in B(c,\epsilon)$. When $\vv=c$, we have $f(\z^*-c)=f(\z^*)$, and since the function $f(\z^*-c+\vv)$ is the same for all $\vv\in B(c,\epsilon)$, we obtain
    $$
    f(\z^*)=f(\w) \text{ for all } \w\in B(\z^*,\epsilon),
    $$
    which implies that $f$ is locally constant. 

   \noindent It remains to prove that the function $f$ is constant. Several versions are available in the literature, but we include our own version for the sake of completeness. Let us consider two arbitrary points $a$ and $b$ and the line segment joining them  $tb + (1-t)a$, for $t\in [0,1]$. The function $g(t)=f(tb + (1-t)a)$ is continuous and also locally constant. We will show by contradiction that $g(1)=g(0)$. Let us suppose that this is not the case and we take $t^* = \inf \{t: g(t)\neq f(a))$. By definition, $g(t) = f(a)$ for $t\in[0,t^*)$, and hence, also in 
   $t^*$ by continuity. Since $g$ is locally constant, the function must satisfy $g(t)=f(a)$ for $t\in (t^*-\epsilon$ , $t^* + \epsilon)$, which contradicts the definition of $t^*$. Therefore, $f$ is constant.
 \end{myproof}

\subsection{Proof of Lemma~\ref{lemma:multivariate_sum}}\label{app:2}

According to the hypothesis, the derivative of $f(z_1,\cdots,z_p)$ with respect to at least one variable $z_i$ is not constant. 
  Let us take the derivative in both sides of \eqref{eq:statement_lemma_function_h} with respect to an arbitrary variable $z_i$ such that the derivative is not constant. Then, we have:
\begin{multline}\label{eq:derivative_multivariate}
  \!\!\!\!\!\!\prt{\frac{\partial f_i}{\partial z_i}}\!\!(x_1\!+\!g(\y_1,\ldots,\y_r),\ldots,x_p\!+\!g(\tau^{p-1}\y_1,\ldots,\tau^{p-1}\y_r))=\\
  \prt{\frac{\partial f_i}{\partial z_i}}\!\!(x_1\!+\!\tilde g(\y_1,\ldots,\y_r),\ldots,x_p\!+\!\tilde g(\tau^{p-1}\y_1,\ldots,\tau^{p-1}\y_r)).   
  \end{multline}
   Since the partial derivative is continuously differentiable according to the hypothesis,  \eqref{eq:derivative_multivariate} is equivalent to \eqref{eq:equality_multivariate} and we can apply Lemma~\ref{lemma:multivariate} to guarantee that either $g=\tilde g$ or $\prt{\frac{\partial f_i}{\partial z_i}}$ is constant. But if $\prt{\frac{\partial f_i}{\partial z_i}}$ is constant, we have a contradiction and then we can conclude that $g=\tilde g$.

   \begin{IEEEbiography}[{\includegraphics[width=1in,height=1.25in,clip,keepaspectratio]{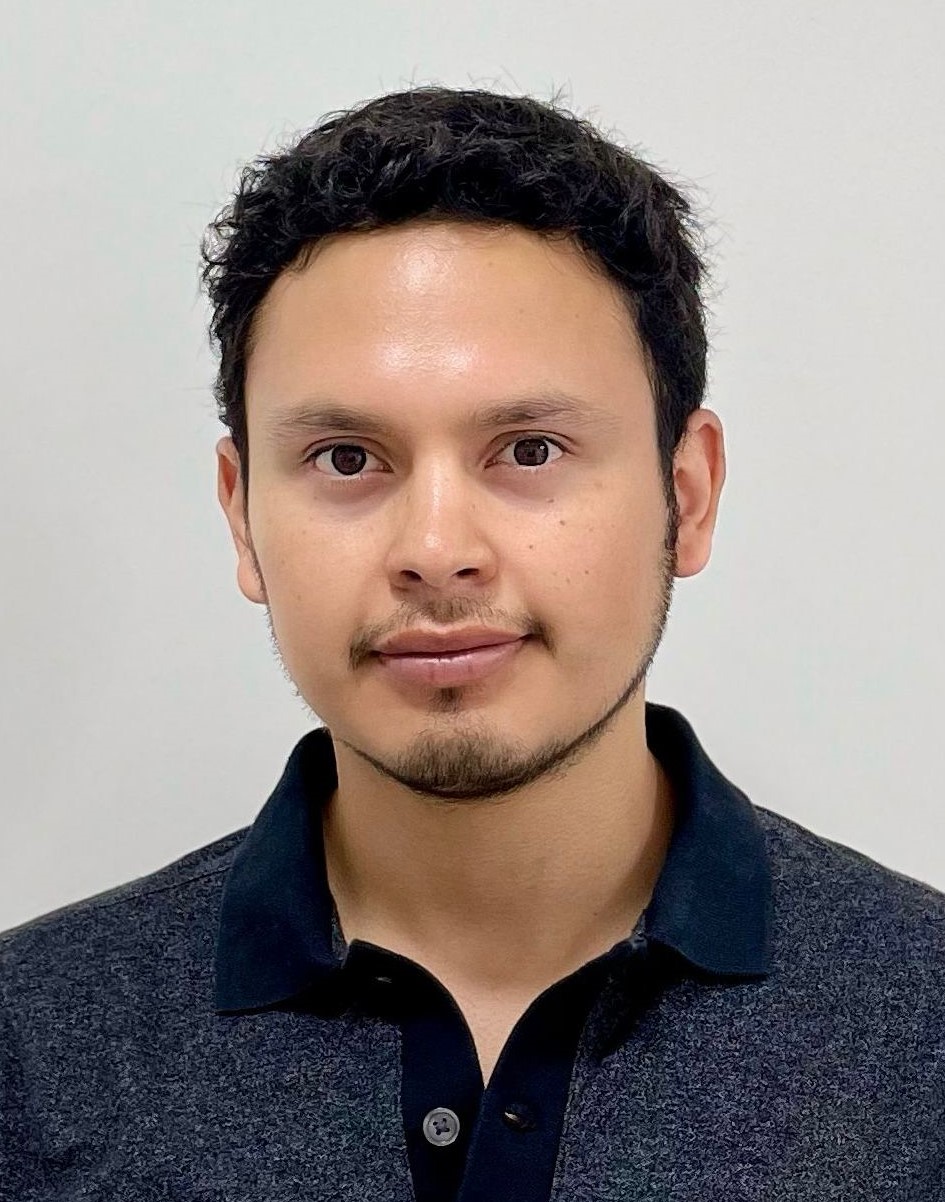}}]{Renato Vizuete}
received the M.S. degree (très bien) in Systems, Control, and Information Technologies from Université Grenoble Alpes, France (2019), and the PhD degree in Automatic Control from Université Paris-Saclay, France (2022). He is currently a FNRS postdoctoral researcher - CR at UCLouvain, Belgium, in the ICTEAM Institute. His research interests include optimization, control theory, machine learning, network theory and system identification. He was the recipient of the Networks and Communication Systems TC Outstanding Student Paper Prize of the IEEE Control Systems Society in 2022, and the Second Thesis Prize in the category Impact Science of the Fondation CentraleSupélec in 2023.
\end{IEEEbiography}

\begin{IEEEbiography}[{\includegraphics[width=1in,height=1.25in,clip,keepaspectratio]{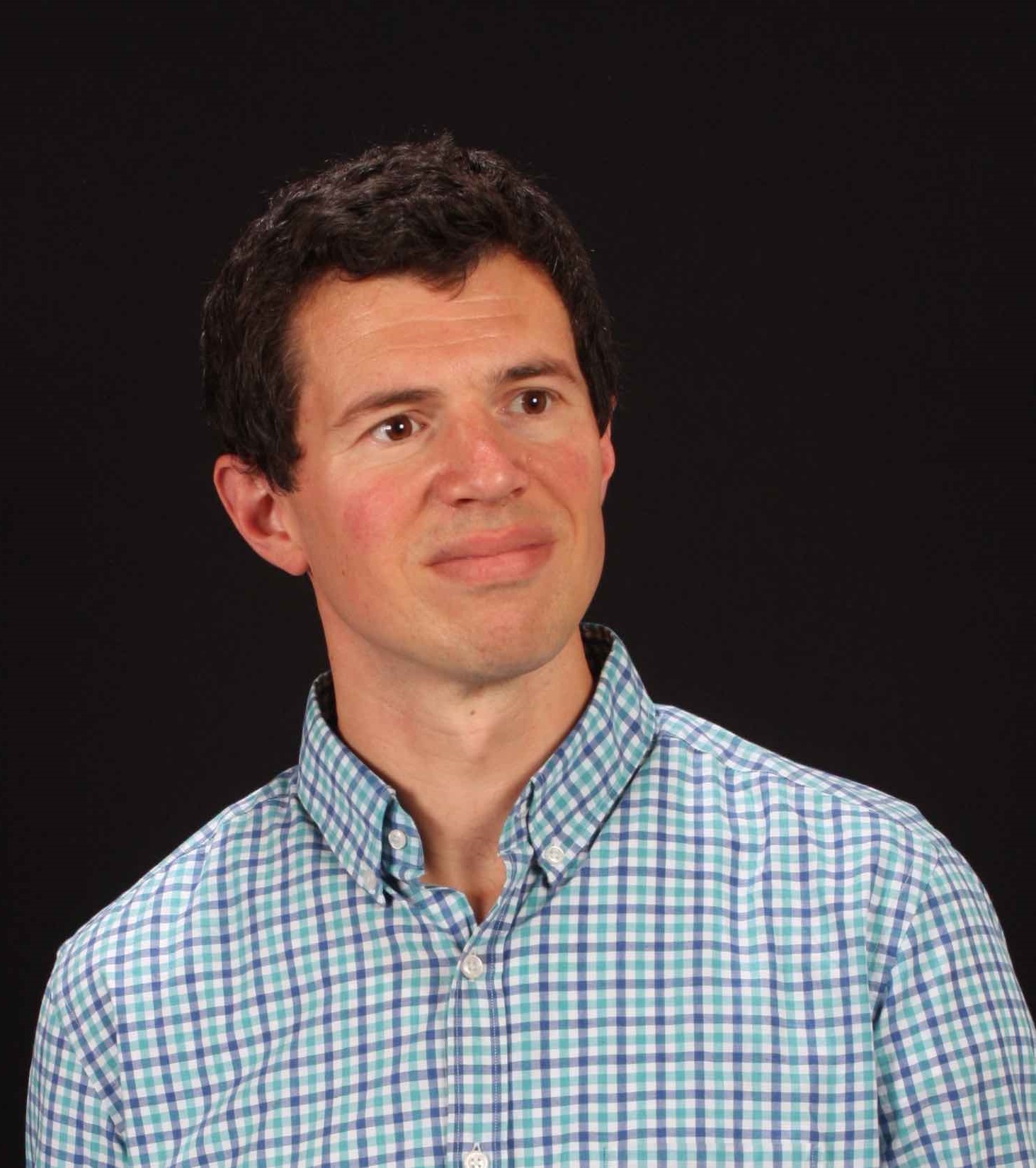}}]{Julien M. Hendrickx}
is professor of mathematical engineering at UCLouvain, in the Ecole Polytechnique de Louvain since 2010, and is currently the head of the ICTEAM institute.

He obtained an engineering degree in applied mathematics (2004) and a PhD in mathematical engineering (2008) from the same university. He has been a visiting researcher at the University of Illinois at Urbana Champaign in 2003-2004, at the National ICT Australia in 2005 and 2006, and at the Massachusetts Institute of Technology in 2006 and 2008. He was a postdoctoral fellow at the Laboratory for Information and Decision Systems of the Massachusetts Institute of Technology 2009 and 2010, holding postdoctoral fellowships of the F.R.S.-FNRS (Fund for Scientific Research) and of Belgian American Education Foundation. He was also resident scholar at the Center for Information and Systems Engineering (Boston University) in 2018-2019, holding a WBI.World excellence fellowship.

Doctor Hendrickx is the recipient of the 2008 EECI award for the best PhD thesis in Europe in the field of Embedded and Networked Control, and of the Alcatel-Lucent-Bell 2009 award for a PhD thesis on original new concepts or application in the domain of information or communication technologies. 
\end{IEEEbiography}

\end{document}